\documentclass[11pt,reqno]{amsart}
\usepackage[margin=1in]{geometry}
\usepackage{amsmath, amsthm, amssymb, enumerate}
\usepackage[colorlinks=true, pdfstartview=FitV, linkcolor=blue,citecolor=blue, urlcolor=blue]{hyperref}
\usepackage[abbrev,lite,nobysame]{amsrefs}
\usepackage{times}
\usepackage[usenames,dvipsnames]{color}
\usepackage{mathtools}

\usepackage{graphicx}

\mathtoolsset{showonlyrefs=true}

\newtheorem{proposition}{Proposition}[section]
\newtheorem{theorem}[proposition]{Theorem}

\newtheorem{lemma}[proposition]{Lemma}
\theoremstyle{definition}
\newtheorem{definition}[proposition]{Definition}
\newtheorem{remark}[proposition]{Remark}

\numberwithin{equation}{section}

\newcommand{\indFn}[1]{1 \! \! 1_{#1}}
\newcommand{\E}{\mathbb{E}}
\newcommand{\Prb}{\mathbb{P}}
\newcommand{\Ma}{\mathcal{P}}

\newcommand{\GG}{\mathcal{G}}
\newcommand{\RR}{\mathbb{R}}
\newcommand{\NN}{\mathbb{N}}

\newcommand{\eps}{\varepsilon}
\newcommand{\e}{{\rm e}}
\newcommand{\dd}{{\rm d}}

\newcommand{\En}{\mathcal{H}}
\newcommand{\EnD}{\mathcal{E}}
\newcommand{\EnR}{\mathcal{H}_{r}}

\newcommand{\XX}{\mathcal{X}}
\newcommand{\XXL}{\mathcal{X}_{L^2}}
\newcommand{\XXH}{\mathcal{X}_{H^1}}

\newcommand{\tu}{{\tilde u}}

\begin{document}
\title[Invariant measures for the stochastic 1D compressible NSE]{Invariant measures for the stochastic one-dimensional compressible Navier-Stokes equations}

\author[M. Coti Zelati, N. Glatt-Holtz, and K. Trivisa]{Michele Coti Zelati, Nathan Glatt-Holtz, and Konstantina Trivisa}

\address{Department of Mathematics, Imperial College London, London, SW7 2AZ, UK}
\email{m.coti-zelati@imperial.ac.uk}

\address{Department of Mathematics,Tulane University, New Orleans LA 70118, USA}
\email{negh@tulane.edu}

\address{Department of Mathematics, University of Maryland, College Park, MD 20742, USA}
\email{trivisa@math.umd.edu}

\subjclass[2000]{35R60, 37L40, 35Q35, 60H30}

\keywords{Navier-Stokes system, compressible fluid, stochastic perturbation, invariant measure}

\begin{abstract}
We investigate the long-time behavior of solutions to a stochastically forced one-dimensional Navier-Stokes
system, describing the motion of a compressible viscous fluid, in the case of linear pressure law. We prove
existence of an invariant measure for the Markov process generated by strong solutions. We overcome the difficulties
of working with non-Feller Markov semigroups on non-complete metric spaces by generalizing the classical Krylov-Bogoliubov
method, and by providing suitable polynomial and exponential moment bounds on the solution, together with pathwise estimates.
\end{abstract}

\maketitle

\setcounter{tocdepth}{3}

\section{Introduction}
We consider the question of existence of an invariant measure for the stochastically forced Navier-Stokes equations of 
compressible fluid flows in one space dimension. Formulated on the unit interval $(0,1)$, the equations read
\begin{align}
&\rho_t + (\rho u)_x = 0 \label{eq:cNSE:1}\\
&\dd (\rho u) + \left( \rho u^2 + A^2\rho\right)_x \dd t = u_{xx} \dd t + \rho \sigma \dd W,
\label{eq:cNSE:2}
\end{align}
where  $\rho=\rho(t,x)$ is the fluid density and  $u=u(t,x)$ is the fluid velocity.
System \eqref{eq:cNSE:1}-\eqref{eq:cNSE:2} is appropriately written in dimensionless form, and is 
supplemented by the initial conditions
\begin{align}\label{eq:initial1}
\rho(0,x)=\rho_0(x), \qquad  u(0,x)=u_0(x),
\end{align}
where $\rho_0,u_0$ are assigned functions defined on
the interval $[0,1]$, with the (strictly positive) initial density having normalized mass
$$
\int_0^1 \rho_0(x) \dd x = 1.
$$
In view of mass conservation, we obviously have
\begin{align}\label{eq:mean1}
\int_0^1 \rho(t,x) \dd x = 1, \qquad \forall t\geq 0.
\end{align}
Moreover, we assume homogeneous  Dirichlet boundary conditions for the
velocity $u$, namely
\begin{align}\label{eq:boundary1}
u(t,0)=u(t,1)=0, \qquad  \forall t\geq 0.
\end{align}
Above, the driving noise is given by a collection of independent white noise processes, suitably colored in space (see \eqref{eq:noiseform} below), while $A>0$ is a dimensionless parameter, inversely proportional to the Mach number. When considering strong (in the deterministic sense) solutions
to \eqref{eq:cNSE:1}-\eqref{eq:boundary1}, the natural phase space for our system is
\begin{align}\label{eq:XXset}
\XX=\left\{(\rho,u)\in H^1(0,1)\times H^1_0(0,1): \int_0^1 \rho(x) \dd x = 1, \ \rho>0\right\}.
\end{align}
The main result of this article is the following.
\begin{theorem}\label{thm:main}
For every fixed $A>0$, the Markov semigroup $\{\Ma_t\}_{t\geq 0}$ associated to \eqref{eq:cNSE:1}-\eqref{eq:boundary1}
possesses an invariant probability measure $\mu_A\in \mathfrak{P}(\XXL)$. Furthermore,
\begin{align}\label{eq:ciaoci}
\int_{\XX} \left[A^2\|(\log\rho)_x\|_{L^2}^2+\|u_x\|_{L^2}^2\right]\dd\mu_A(\rho,u) \leq \|\sigma\|^2_{L^\infty}.
\end{align}
Here, $\XXL$ denotes the set $\XX$ in \eqref{eq:XXset}, endowed with the $L^2\times L^2$ metric, and $\|\sigma\|_{L^\infty}$ is
given by \eqref{eq:noise1} below.
\end{theorem}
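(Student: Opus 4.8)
\emph{Strategy.} The plan is to run a Krylov--Bogoliubov construction adapted to the obstructions flagged in the abstract: $\{\Ma_t\}$ need not be Feller on $\XXL$, and $\XXL$ is not complete (not Polish: its $L^2$-closure contains the vacuum set and a low-regularity stratum). Fix $(\rho_0,u_0)\in\XX$ and, for $T\ge1$, set
\[
\nu_T:=\frac1T\int_0^T\Ma_t^\ast\delta_{(\rho_0,u_0)}\,\dd t\ \in\ \mathfrak P(\XXL),
\]
with $\Ma_t^\ast$ the dual semigroup on measures. It suffices to prove: (i) $\{\nu_T\}_{T\ge1}$ is tight on $\XXL$ with every weak limit point carried by $\XX$; and (ii) any such limit $\mu_A$ satisfies $\Ma_t^\ast\mu_A=\mu_A$. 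Then \eqref{eq:ciaoci} follows from the estimate behind (i) by lower semicontinuity.

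\emph{Step 1: energy and Bresch--Desjardins balances.} It\^o's formula applied to the physical energy
\[
\En(\rho,u)=\int_0^1\Big(\tfrac12\rho u^2+A^2(\rho\log\rho-\rho+1)\Big)\dd x ,
\]
together with $\int_0^1\rho\,\dd x=1$ and the multiplicative form $\rho\sigma\,\dd W$ of the noise, gives
\[
\dd\En+\|u_x\|_{L^2}^2\,\dd t=\tfrac12\Big(\int_0^1\rho|\sigma|^2\,\dd x\Big)\dd t+\dd M_t,\qquad\E\,\dd M_t=0 .
\]
There is moreover a Bresch--Desjardins-type entropy $\EnD\ge0$ obeying the analogous balance, with dissipation $A^2\|(\log\rho)_x\|_{L^2}^2$ and the \emph{same} It\^o correction $\tfrac12\int_0^1\rho|\sigma|^2\,\dd x$. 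This is transparent in Lagrangian mass coordinates $y=\int_0^x\rho\,\dd x'$, where \eqref{eq:cNSE:1} reads $\pd t\tau=\pd y u$ ($\tau=1/\rho$) and the shifted velocity $w:=u+\pd y\log\rho$ obeys $\dd w=-A^2\,\pd y\rho\,\dd t+\sigma\,\dd W$, with no coupling to the viscous term; one takes $\EnD=\tfrac12\int_0^1 w^2\,\dd y+A^2\int_0^1(\rho\log\rho-\rho+1)\,\dd x$. Summing the two balances, taking expectations, integrating over $[0,T]$ and dropping the nonnegative terminal value gives, uniformly in $T\ge1$,
\[
\frac1T\int_0^T\E\Big[\|u_x\|_{L^2}^2+A^2\|(\log\rho)_x\|_{L^2}^2\Big]\dd t\ \le\ \|\sigma\|_{L^\infty}^2+\frac{(\En+\EnD)(\rho_0,u_0)}{T} .
\]
I also use the elementary one-dimensional fact that a continuous $\rho\in H^1(0,1)$ with $\int_0^1\rho=1$ attains the value $1$, whence $\|\log\rho\|_{L^\infty}\le\|(\log\rho)_x\|_{L^2}$; this converts $L^2$ control of $(\log\rho)_x$ into pointwise two-sided bounds on $\rho$, and $\|\rho_x\|_{L^2}=\|\rho(\log\rho)_x\|_{L^2}\le\|\rho\|_{L^\infty}\|(\log\rho)_x\|_{L^2}$ into an $H^1$ bound. (The further polynomial, exponential and pathwise estimates announced in the abstract underpin the global well-posedness of strong solutions in $\XX$ --- hence the very definition of $\{\Ma_t\}$ --- and justify the limit passages below; I take $\{\Ma_t\}$ as given.)

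\emph{Step 2: tightness, localization in $\XX$, and \eqref{eq:ciaoci}.} For $M>0$ set $K_M:=\{(\rho,u)\in\XX:\ \|u_x\|_{L^2}\le\sqrt M,\ \|(\log\rho)_x\|_{L^2}\le\sqrt M\}$. By Step 1 and Chebyshev, $\nu_T(\XXL\setminus K_M)\le C_A\big(\|\sigma\|_{L^\infty}^2+(\En+\EnD)(\rho_0,u_0)\big)/M$ for all $T\ge1$. Using the bounds of Step 1 once more, $K_M$ is a closed, $H^1(0,1)\times H^1_0(0,1)$-bounded subset of $\XX$ with $e^{-\sqrt M}\le\rho\le e^{\sqrt M}$; by Rellich's theorem it is compact in $\XXL$, and $K_M\subset\XX$. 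Hence $\{\nu_T\}$ is tight on $\XXL$, and by the (elementary, general metric-space) direction of Prokhorov's theorem some subsequence $\nu_{T_n}\rightharpoonup\mu_A\in\mathfrak P(\XXL)$, with $\mu_A$ carried by $\bigcup_M K_M\subset\XX$ --- in particular charging neither vacuum nor the low-regularity stratum. Since $(\rho,u)\mapsto\|u_x\|_{L^2}^2+A^2\|(\log\rho)_x\|_{L^2}^2$ is lower semicontinuous for $L^2\times L^2$ convergence (a convergent sequence with finite liminf of it is $H^1$-bounded, hence weakly $H^1$-convergent, norms being weakly lower semicontinuous and the density constraint passing via the $L^\infty$ bound), the portmanteau inequality along $\nu_{T_n}\rightharpoonup\mu_A$ and the bound of Step 1 give \eqref{eq:ciaoci}.

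\emph{Step 3: invariance without the Feller property, and the main difficulty.} For $\varphi\in C_b(\XXL)$ and $t\ge0$, the Markov relation $\Ma_t^\ast\nu_T=\tfrac1T\int_t^{T+t}\Ma_s^\ast\delta_{(\rho_0,u_0)}\,\dd s$ yields the telescoping bound $|\langle\varphi,\Ma_t^\ast\nu_T\rangle-\langle\varphi,\nu_T\rangle|\le 2t\|\varphi\|_{L^\infty}/T\to0$, so it remains to pass to the limit along $T_n$ in $\langle\varphi,\Ma_t^\ast\nu_{T_n}\rangle=\langle\Ma_t\varphi,\nu_{T_n}\rangle$. Since the $\nu_{T_n}$ and $\mu_A$ are concentrated (up to $\eps$) on the compacts $K_M$, it is enough that $\Ma_t\varphi$ be continuous on each $K_M$: if $K_M\ni(\rho_0^k,u_0^k)\to(\rho_0,u_0)$ in $L^2\times L^2$ then $\E\,\varphi(\rho^k(t),u^k(t))\to\E\,\varphi(\rho(t),u(t))$. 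This I get from a continuous-dependence estimate for \eqref{eq:cNSE:1}--\eqref{eq:boundary1}: on $K_M$ the data are $H^1$-bounded with $\rho$ pinched between positive constants, so $L^2$ convergence interpolates to $H^s\times H^s$ convergence for all $s<1$, which suffices --- the density difference controlled via DiPerna--Lions renormalization (transport by the $W^{1,1}$ velocity), the velocity difference by parabolic estimates. A routine cutoff then finishes: fix $\eps>0$, pick $M$ with $\sup_T\nu_T(K_M^c)$ and $\mu_A(K_M^c)$ both $<\eps$, extend $\Ma_t\varphi|_{K_M}$ by Tietze to $g_M\in C_b(\XXL)$ with $\|g_M\|_{L^\infty}\le\|\varphi\|_{L^\infty}$ and $g_M=\Ma_t\varphi$ on $K_M$; then $\langle g_M,\nu_{T_n}\rangle\to\langle g_M,\mu_A\rangle$, the errors from $K_M^c$ are $O(\eps\|\varphi\|_{L^\infty})$, so $\langle\Ma_t\varphi,\nu_{T_n}\rangle\to\langle\Ma_t\varphi,\mu_A\rangle$ and hence $\Ma_t^\ast\mu_A=\mu_A$ for every $t\ge0$. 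The decisive difficulty is Step 1 --- carrying out the Bresch--Desjardins computation rigorously for strong solutions and, above all, proving the \emph{uniform-in-time lower bound on the density}: vacuum is an absorbing-type degeneracy whose exclusion rests precisely on the entropy structure together with the one-dimensional embedding, and the accompanying moment/pathwise hierarchy is also what makes the system globally well posed. The continuous-dependence input to Step 3 --- $L^2$-stability of the stochastic compressible system on $H^1$-bounded sets --- is the other substantial point.
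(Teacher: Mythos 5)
Your overall architecture is exactly the paper's: Krylov--Bogoliubov averages, tightness from the time-averaged dissipation $\|u_x\|_{L^2}^2+A^2\|(\log\rho)_x\|_{L^2}^2$ of the Bresch--Desjardins-type entropy combined with the pointwise pinching $\|\log\rho\|_{L^\infty}\le\|(\log\rho)_x\|_{L^2}$, and invariance recovered without the Feller property by proving continuity of $\Ma_t\varphi$ only on the compacts where the averaged measures concentrate and then using a Tietze extension. Your Lagrangian derivation of the modified energy (the effective velocity $w=u+\partial_y\log\rho$ decoupling from the viscous term) is a clean equivalent of the paper's Eulerian computation, and your derivation of \eqref{eq:ciaoci} by lower semicontinuity of the dissipation functional along $\nu_{T_n}\rightharpoonup\mu_A$ is a legitimate (arguably more self-contained) alternative to the paper's argument via a statistically stationary solution.

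The genuine gap is the continuous-dependence estimate in Step 3, which you correctly identify as ``the other substantial point'' but then dispatch in one clause: ``the density difference controlled via DiPerna--Lions renormalization\dots, the velocity difference by parabolic estimates.'' This is the technical heart of the whole result and the proposed mechanism does not obviously close. The two densities are transported by \emph{different} velocities, so the density difference is only controlled in $L^2$ with no gain of derivatives; the velocity-difference equation then contains the pressure term $A^2\bigl(\rho^k_x/\rho^k-\rho_x/\rho\bigr)$ and the variable-coefficient viscous term $u^k_{xx}/\rho^k-u_{xx}/\rho$, and a naive $L^2$ energy estimate on $\delta u$ produces commutator terms such as $\int \delta u_x\,\delta u\,(1/\rho^k)_x\,\dd x$, bounded only by $C\|\delta u_x\|_{L^2}^2$ with a large constant that the dissipation $\int|\delta u_x|^2/\rho^k$ cannot absorb. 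The paper avoids exactly this by running a Gronwall argument on the \emph{relative entropy} $\EnR(\rho^k,u^k\,|\,\rho,u)=\int\bigl(\tfrac12\rho^k(u^k-u)^2+A^2\rho^k\log(\rho^k/\rho)\bigr)\dd x$: the weight $\rho^k$ in the kinetic part and the convexity of $\xi\log\xi$ make the bad commutators cancel, leaving a Gronwall factor $\int_0^t\int_0^1|u_{xx}|^2/\rho\,\dd x\,\dd s$ that involves only the \emph{limit} solution and is finite pathwise by the $H^1$-velocity estimate; the ordering of the two arguments in $\EnR$ (it is not symmetric) is essential for the constants to be uniform along the sequence. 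Without this (or a genuinely worked-out substitute), the continuity of $\Ma_t\varphi$ on $K_M$ --- and hence the invariance of $\mu_A$ --- is not established. A secondary, minor caveat: your claim that $L^2$ convergence ``interpolates to $H^s$ convergence, which suffices'' is itself part of what must be proved; the relative-entropy route in fact needs only $L^2$ convergence of the data together with the uniform $\EnD$ bound, both of which you do have on $K_M$.
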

The existence of statistically stationary states to randomly driven systems in fluid dynamics is of basic importance from 
both mathematical and experimental viewpoints. On the one hand, the existence of an invariant measure provides information
on the generic long-time behavior of the system. On the other hand, under ergodicity assumptions, it provides a link between
experimental observations (for example, in turbulence theory) and theoretical predictions.

In the context of fluid dynamics, a satisfactory theory of invariant measures has been developed for two-dimensional, incompressible
flows \cites{EMS01,HM06, Mattingly02,BKL02,BKL01,KS00,FM95,MY02,Hairer02,DPZ96,KS12}. In three dimensions, we mention the works 
\cites{DPD03,DPD08,FR08}, and especially \cite{FG95}, in which solutions which are strictly stationary stochastic processes are 
constructed, but the concept of invariant measure as a steady state is not well defined due to the absence of the Markov property. 
In this sense, the situation for multi-dimensional compressible flows is similar. To the best of our knowledge, the only relevant article 
 is the recent \cite{BFHM17}, in which statistically stationary solutions (but no invariant measure) are constructed. There are, however,
 results for both the dissipative and non-dissipative one-dimensional Burgers equation  \cites{Sinai91,EKMS00,GM05,DPD98,DPG95}.

In this article, we prove existence of an invariant measure for the compressible Navier-Stokes system. 
The main advantage in working in one space dimension is that the equations are globally well-posed and can be solved pathwise,  leaning on various  works in the deterministic setting \cites{KazhikhovShelukhin77,Solonnikov76, MV07,Hoff87,Hoff98,HS01,HZ00, HZ03}.  
 For the global solvability in the stochastic case, we mention \cites{TH96,TH98}, valid in one space dimension, and 
\cites{Tornatore00, Smith15, WW15, BH14, BFH16} for the multi-dimensional case. For the deterministic set-up, we refer
to the classical references \cites{Feireisl04,Lions98}, and to \cites{FP01, FP07} for long-time behavior results.

In turn, \eqref{eq:cNSE:1}-\eqref{eq:boundary1} generates a proper Markov semigroup, and the concept of an invariant measure 
can be defined in a standard manner. However, there are several obstructions on the path towards 
the proof of Theorem \ref{thm:main}.  The main ingredients of our approach and contributions to the existing theory  
on invariant measures can be summarized as follows:

\medskip 

\noindent $\bullet$
 {\em  An $L^2$-based continuity result via the derivation of polynomial and exponential moment bounds.} As it is is clear from its statement, the topology on $\XX$ plays an essential role. The
metric space  $\XXL$ is not complete (hence not Polish), due to both the open condition $\rho >0$ and the fact that the Sobolev 
space $H^1$ is not a closed subspace of $L^2$. This choice is dictated by the available continuous dependence 
estimates (cf. Theorem \ref{eq:contdiprel}), which make use of the so-called relative entropy functional and essentially provide an $L^2$-based
continuity result. Such a result crucially depends on polynomial and exponential moment bounds of the solution to 
\eqref{eq:cNSE:1}-\eqref{eq:boundary1}, which are carried out in detail in Section \ref{sec:eneraefd}, and on careful pathwise estimates (see
Section \ref{sub:pathest}).

\medskip 

\noindent $\bullet$
{\em Introduction of a (larger) class of functions on $\XXL$ 
which is invariant under the Markov semigroup $\Ma_t$.}
Closely related to the issue mentioned above, a second difficulty lies in the fact that the Markov semigroup 
associated to \eqref{eq:cNSE:1}-\eqref{eq:boundary1} 
is not known to be Feller (cf. Section \ref{sub:Markov}), namely $\Ma_t$ may not map  $C_b(\XXL)$ (the real-valued, continuous bounded functions on $\XXL$) into itself. This is again attributable to the mismatch between the set $\XX$ and the $L^2$ topology, and 
causes problems when trying to apply the classical Krylov-Bogoliubov 
procedure to prove existence and, in particular, invariance of a probability measure constructed as a subsequential limit of time-averaged measures. We circumvent this issue by defining a  class of functions on $\XXL$ that is slightly larger that $C_b(\XXL)$, 
which is invariant under the Markov semigroup $\Ma_t$, and still well-behaved in the above limiting procedure.

\medskip 

\noindent $\bullet$
 {\em Derivation of lower and upper bounds of the density with the  correct time-averaged growth.}  Lastly, the lack of instantaneous smoothing in the density equation \eqref{eq:cNSE:1} constitutes an obstacle towards compactness estimates
for time-averaged measures. However, an energy structure already exploited in \cites{HZ00,MV07} allows to obtain a dissipation 
term involving $\|(\log\rho)_x\|_{L^2}$, thanks to the linear pressure law adopted here. Consequently, upper and lower bounds on the
density (which, in 1D, can be proven with more general pressure laws) have the correct time-averaged growth and provide suitable 
tightness estimates for time-averaged measures (cf. Section \ref{sub:tight}).

\medskip 

\noindent $\bullet$
The present article is, according to our knowledge, the first  that presents rigorous results on the existence of invariant measures for compressible flows.

\begin{remark}[The low Mach number limit]
One may be tempted to study the behavior of the measures $\mu_A$ as $A\to \infty$, in the spirit of compressible-incompressible 
limits \cites{DGLM99, LM98, Masmoudi07,FN07,BFH15}. From \eqref{eq:ciaoci}, it is clear that, as $A\to \infty$, the density component
 of the measures concentrates on sets such that $\log \rho =0$, namely, $\rho=1$. However, \eqref{eq:cNSE:1} implies that $u_x=0$,
 and due to the homogeneous boundary conditions \eqref{eq:boundary1}, we deduce that also $u=0$. This is inconsistent with the
 second equation, as long as $\sigma\neq 0$. By replacing $\sigma$ with $A^{-\eta}\sigma$, for any $\eta>0$, it is easily seen 
 from \eqref{eq:ciaoci} that $\mu_A\to \delta_{(\rho=1,u=0)}$ as $A\to \infty$, essentially describing rigid body motion.
\end{remark}

\subsection*{Extensions and further developments}
Theorem \ref{thm:main} is in fact true for pressure laws of the type
$$
p(\rho)\sim \rho, \quad \text{as } \rho\to 0,
$$
and any polynomial growth as $\rho\to \infty$. The case of pure power law  $p(\rho)=\rho^\gamma$, for $\gamma>1$, remains open.
Another important question is the uniqueness (and hence ergodicity) and the attracting properties  of the measure $\mu_A$. It is
worth to point out that the problem is highly degenerate, since the noise only acts on one component of the phase space. We conjecture
that uniqueness holds, since the unforced system evolves (at a very slow rate) towards the unique steady state $(\rho,u)=(1,0)$.

\subsection*{Outline of the article}
Section \ref{sec:sol} is dedicated to the properties of solutions to the compressible Navier-Stokes equations. 
Energy estimates and moment bounds are proven in Section \ref{sec:eneraefd}, pathwise estimates in Section \ref{sub:pathest}, 
while uniqueness and continuous
dependence on data are discussed in Sections \ref{sub:unique} and \ref{sub:conti}, respectively. In Section 
\ref{sec:invar} we prove Theorem \ref{thm:main}, setting up the Markovian framework and discussing existence
of invariant measure for non-Feller Markov processes on non-complete metric spaces.

\subsection*{Notation and conventions}
Throughout the paper, $c$ will denote a \emph{generic} positive constant \emph{independent} of $A$, 
whose value may change even in the same line of a certain equation. In the same way, $C_1, C_2, \ldots$ denote
specific large deterministic  constants.
We will denote by $\XXL$ the set $\XX$ endowed with $L^2 \times L^2$ metric. Notice that $\XXL$ is clearly
a metric space. When we want to indicate the set $\XX$ endowed with the $H^1\times H^1$ metric, we write $\XXH$.
By $\mathcal{B}(\XXL)$, we denote the family of Borel subsets of $\XXL$. 
 With the symbol $M_b(\XXL)$ (resp. 
$C_b(\XXL)$) we refer to the set of all real valued bounded measurable (resp. bounded continuous)
functions on $\XXL$. Finally, $\mathfrak{P}(\XXL)$ is the set of all probability measures on $\XXL$.

\section{Existence and uniqueness of solutions}\label{sec:sol}
In view of the particular form of the stochastic forcing in \eqref{eq:cNSE:2}, the proof of 
existence and uniqueness of pathwise solutions borrows many ideas from the deterministic
case. Fix a stochastic basis 
$$
\mathcal{S}=(\Omega, \mathcal{F}, \Prb, \{\mathcal{F}_t\}_{t\geq 0}, W).
$$
We write the noise term in \eqref{eq:cNSE:2} as
\begin{align}\label{eq:noiseform}
 \sigma \dd W= \sum_{k=1}^\infty \sigma_k(x)\dd W^k(t).
\end{align}
The sequence $W(t) =\{W^k(t)\}_{k\in\NN}$ consists of independent copies of the standard one-dimensional
Wiener process 
(Brownian motion).  As such, for each $k$, $\dd W^k(t)$ is formally a 
white noise which, in particular, is stationary in time.
Throughout the article, we will assume 
\begin{align}\label{eq:noise}
\|\sigma_{xx}\|_{L^2}^2 :=\int_0^1 \sum_{\ell=1}^\infty |(\sigma_\ell)_{xx}(x)|^2\dd x<\infty, \qquad  \sigma_\ell\in H^2\cap H^1_0.
\end{align}
In particular, this implies 
\begin{align}\label{eq:noise1}
\|\sigma\|_{L^\infty}^2 :=\sup_{x\in (0,1)} \sum_{\ell=1}^\infty |\sigma_\ell(x)|^2<\infty.
\end{align}
By considering the change of variable $\tu=u-\sigma W$, a pathwise approach for the existence of
solutions (even in the multidimensional case) has been carried out in \cite{FMN13} (see also \cites{Tornatore00,TH96,TH98} and the 
more recent \cite{BFH16} for local existence of strong solutions). 
In the deterministic setting and in one space dimension, the local existence of strong solutions has been proved in \cite{Solonnikov76}, 
while their global existence can be found in \cite{MV07}, in the
case of pressure law of the type $\rho^\gamma$, for $\gamma>1$. That this result can be extended to our case is a consequence
of the estimates provided in the work \cite{HZ00} on (deterministic) weak solutions and their regularization properties. We summarize
these observations in the theorem below.

\begin{theorem}\label{T2.1}
Fix a stochastic basis $\mathcal{S}=(\Omega, \mathcal{F}, \Prb, \{\mathcal{F}_t\}_{t\geq 0}, W)$. For any $(\rho_0,u_0)\in \XX$,
there exists a unique $(\rho(t;\rho_0), u(t;u_0))$ satisfying \eqref{eq:cNSE:1}-\eqref{eq:boundary1}, in the time integrated sense 
and with the regularity
$$
\rho\in L_{loc}^\infty(0,\infty;H^1), \qquad u\in L_{loc}^\infty(0,\infty;H_0^1)\cap L_{loc}^2(0,\infty;H^2),
$$
almost surely. Moreover, for every $t\geq 0$,
$$
\rho(t)\in L^\infty \qquad \text{and}\qquad \frac{1}{\rho(t)}\in L^\infty,
$$
almost surely.
\end{theorem}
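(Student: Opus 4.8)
The plan is to remove the stochastic integral by the change of unknown $\tu = u - \sigma W$, with $\sigma W := \sum_{k\ge 1}\sigma_k W^k$, and then solve the resulting random equation pathwise with the deterministic machinery. By \eqref{eq:noise}, $\Prb$-a.s.\ the map $t\mapsto\sigma W(t)$ is continuous with values in $H^2\cap H_0^1$. Since $\rho$ has finite variation in time (it solves the transport equation \eqref{eq:cNSE:1}), Itô's product rule gives $\dd(\rho u)=\rho\,\dd u-u(\rho u)_x\,\dd t$ with no cross variation, so \eqref{eq:cNSE:2} is equivalent to $\dd u=\rho^{-1}\big[u_{xx}-(\rho u^2+A^2\rho)_x+u(\rho u)_x\big]\,\dd t+\sigma\,\dd W$. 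Hence $\tu:=u-\sigma W$ satisfies, pathwise and with no Itô term, the system
\begin{equation*}
\rho_t+(\rho(\tu+\sigma W))_x=0,\qquad \rho\,\tu_t=(\tu+\sigma W)_{xx}-\big(\rho(\tu+\sigma W)^2+A^2\rho\big)_x+(\tu+\sigma W)\big(\rho(\tu+\sigma W)\big)_x.
\end{equation*}
For $\Prb$-a.e.\ $\omega$ this is a one-dimensional compressible Navier-Stokes system in which all the terms produced by substituting $u=\tu+\sigma W$ are of lower order relative to $\tu_{xx}$ and involve only the fixed, time-continuous, $H^2$-valued datum $\sigma W(t)$; no time derivative of $\sigma W$ ever appears.

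The next step is local existence: for $(\rho_0,u_0)\in\XX$ one invokes the strong-solution theory of \cite{Solonnikov76}, which, after a routine modification of the fixed-point/continuation argument to absorb the extra forcing (bounded on compact time intervals), yields a unique solution on a random maximal interval $[0,T^*(\omega))$ with $\rho\in L^\infty(0,T;H^1)$, $\tu\in L^\infty(0,T;H_0^1)\cap L^2(0,T;H^2)$ and $\rho,\,1/\rho\in L^\infty([0,T]\times[0,1])$ for every $T<T^*$. To globalize, I would establish a priori bounds that stay finite on every finite interval. Testing the momentum equation with $u$ and using the linear pressure law produces the relative-entropy structure of \cites{HZ00,MV07}, which controls $\|\sqrt\rho\,u\|_{L^\infty_t L^2}$, $\|u_x\|_{L^2_t L^2}$ and, crucially, $\|(\log\rho)_x\|_{L^2_t L^2}$, the Brownian forcing contributing only terms dominated by $\sup_{[0,T]}\|\sigma W\|_{H^2}$. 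From this entropy-type control one recovers, via the one-dimensional structure (Lagrangian mass coordinates, the representation formula for $\rho$, and the effective viscous flux, exactly as in \cites{MV07,HZ00} but keeping track of the dependence on $\sigma W$), pointwise bounds $0<c(T,\omega)\le\rho(t,x)\le C(T,\omega)<\infty$; bootstrapping then gives the stated $H^1$ bound on $\rho$ and the $L^\infty_t H^1\cap L^2_t H^2$ bound on $\tu$, hence on $u=\tu+\sigma W$. Since all these constants are finite for each finite $T$, the maximal solution is global, and measurability in $\omega$ follows because the whole construction is carried out continuously (hence measurably) from the fixed Wiener path.

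Finally, uniqueness would be obtained pathwise by the standard energy method for the difference of two solutions issued from the same data: with $\rho$ and $1/\rho$ bounded above and below on $[0,T]$, the difference system closes in $L^2$ and a Grönwall argument forces the difference to vanish. The main obstacle is the globalization step — proving the pointwise upper and lower density bounds with constants that remain finite on each bounded time interval; this is exactly where one-space-dimensionality and the near-linear pressure law are indispensable, and it is here that the deterministic estimates of \cites{MV07,HZ00}, adapted to track the continuous forcing $\sigma W$, do the real work.
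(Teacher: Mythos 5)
Your proposal is correct and follows essentially the same route as the paper: the change of variable $\tu = u - \sigma W$ to reduce to a pathwise random system (this is exactly the reduction the paper performs in Section \ref{sub:pathest}, citing \cite{FMN13}), local existence from \cite{Solonnikov76}, and globalization via the entropy/energy structure of \cites{HZ00,MV07} giving control of $\|(\log\rho)_x\|_{L^2}$ and hence the pointwise upper and lower density bounds (the paper's Lemma \ref{lem:enbounds} and Proposition \ref{prop:allestimates2}). The paper itself only sketches this argument by reference to the literature, and your outline fills it in consistently with the detailed pathwise estimates the paper later provides.
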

The bounds in the Theorem \ref{T2.1}  (see also Lemma \ref{lem:enbounds}) for the 
density show that neither vacuum states nor concentration states can
occur, no matter how large the initial datum is.
This is one of several important
differences between the Navier-Stokes equations and the inviscid Euler
equations, for which vacuum states may in fact occur for large initial data
and for certain equations of state (cf. \cites{CH,CG}). It is also relevant in
this regard that solutions of the Navier-Stokes equations show certain
instabilities when vacuum states are allowed (cf. Hoff and Serre \cite{HS}).

As far as the regularity of solutions is concerned, we need quantitative estimates and moment bounds that are new to 
the best of our knowledge. These estimates are contained in the the Propositions \ref{prop:allestimates} and \ref{prop:allestimates2} below,
and, in particular, agree with the regularity expressed in the theorem above.

\subsection{Energy estimates}\label{sec:eneraefd}
For a pair $(\rho,u)\in \XX$, we define the entropy function in the classical way as
\begin{align}
\En(\rho,u) = \int_0^1 \left(\frac{1}{2} \rho u^2 + A^2\rho \log\rho \right)\dd x.
\end{align}
In the one-dimensional setting, it turns out the strong solutions are also well-behaved with respect to 
the modified energy functional
\begin{align}
\EnD(\rho, u) 
= \En(\rho, u) + \frac{1}{2}  \int_0^1 \left(   \frac{\rho_x u}{\rho} +   \frac{1}{2} \frac{\rho_x^2}{\rho^3} \right) \dd x.
\end{align}
Several quantities are controlled by $\EnD$. In particular, uniform estimates on $\EnD$ entail lower bounds on $\rho$. 
\begin{lemma}\label{lem:enbounds}
Assume that $(\rho,u)\in \XX$ are such that $\EnD(\rho,u)<\infty$. Then
\begin{align}\label{eq:unifbound1}
\e^{-\left[ 8\EnD(\rho,u) \right]^{1/2}} \leq \rho(x) \leq  \e^{\left[ 8\EnD(\rho,u) \right]^{1/2}}, \qquad \forall x\in [0,1],
\end{align}
and  
\begin{align}\label{eq:unifbound2}
\|\rho_x\|_{L^2}^2 \leq 8\EnD(\rho,u)\e^{3\left[ 8\EnD(\rho,u) \right]^{1/2}}  .
\end{align}
Moreover,
\begin{align}\label{eq:unifbound3}
\| u\|^2_{L^2} \leq 2\En(\rho,u ) \e^{\left[ 8\EnD(\rho,u) \right]^{1/2}}.
\end{align}
\end{lemma}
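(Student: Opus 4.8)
The key to this lemma is the identity relating $\EnD$ to a weighted $L^2$-norm of $(\log\rho)_x$. First I would complete the square in the integrand defining $\EnD$: writing $\rho_x/\rho = (\log\rho)_x$, one has
\[
\frac{\rho_x u}{\rho} + \frac{1}{2}\frac{\rho_x^2}{\rho^3} = (\log\rho)_x u + \frac{1}{2}\left(\frac{(\log\rho)_x}{\rho}\right)^2 \cdot \rho^{-1}\rho = \ldots,
\]
so I would instead group terms directly with the kinetic part of $\En$. The cleanest route: observe that
\[
\frac12\rho u^2 + \frac12\frac{\rho_x u}{\rho} + \frac14 \frac{\rho_x^2}{\rho^3}
= \frac{\rho}{2}\left(u + \frac{\rho_x}{2\rho^2}\right)^2 + \frac14\frac{\rho_x^2}{\rho^3} - \frac18\frac{\rho_x^2}{\rho^3}
\]
— one should check the algebra, but the point is that after completing the square in $u$ one is left with a nonnegative multiple of $\rho_x^2/\rho^3 = ((\log\rho)_x)^2/\rho$. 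Combined with $A^2\rho\log\rho$, and using $\int \rho = 1$ together with the elementary bound $\rho\log\rho \geq -e^{-1}$ (so the entropy term is bounded below), one deduces that $\EnD(\rho,u)$ controls $\int_0^1 \frac{((\log\rho)_x)^2}{\rho}\,\dd x$ up to an absolute constant, and in particular $\int_0^1 ((\log\rho)_x)^2\rho^{-1}\,\dd x \le 8\EnD(\rho,u)$ after tracking the constants. I expect the main obstacle to be bookkeeping the exact constant $8$ through the square completion and the lower bound on the entropy.

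Next I would derive the pointwise density bounds \eqref{eq:unifbound1}. The mass constraint $\int_0^1\rho\,\dd x = 1$ guarantees there is a point $x_0\in[0,1]$ with $\rho(x_0)=1$, i.e. $\log\rho(x_0)=0$. Then for any $x$,
\[
|\log\rho(x)| = \left|\int_{x_0}^x (\log\rho)_y\,\dd y\right| \le \left(\int_0^1 \frac{((\log\rho)_y)^2}{\rho}\,\dd y\right)^{1/2}\left(\int_0^1 \rho\,\dd y\right)^{1/2} \le \left(8\EnD(\rho,u)\right)^{1/2},
\]
by Cauchy-Schwarz with the weight $\rho$ and again the mass constraint. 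Exponentiating gives \eqref{eq:unifbound1}. This is the step where the weighted norm pays off: a plain $L^2$ bound on $(\log\rho)_x$ would not close, but the $\rho$-weighted version cancels exactly against $\int\rho=1$.

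For \eqref{eq:unifbound2}, write $\rho_x = \rho\,(\log\rho)_x$, so $\rho_x^2 = \rho^2((\log\rho)_x)^2 = \rho^3 \cdot \frac{((\log\rho)_x)^2}{\rho}$. Using the upper bound $\rho \le e^{[8\EnD]^{1/2}}$ from \eqref{eq:unifbound1} pointwise, $\rho^3 \le e^{3[8\EnD]^{1/2}}$, hence $\int_0^1 \rho_x^2\,\dd x \le e^{3[8\EnD]^{1/2}}\int_0^1 \frac{((\log\rho)_x)^2}{\rho}\,\dd x \le 8\EnD(\rho,u)\,e^{3[8\EnD]^{1/2}}$. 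Finally, for \eqref{eq:unifbound3}, I would bound $\|u\|_{L^2}^2 = \int_0^1 u^2\,\dd x$ by inserting $\rho/\rho$ and using the lower bound $\rho \ge e^{-[8\EnD]^{1/2}}$: $\int_0^1 u^2\,\dd x \le e^{[8\EnD]^{1/2}}\int_0^1 \rho u^2\,\dd x$. Since $\int_0^1\frac12\rho u^2\,\dd x \le \En(\rho,u) + e^{-1}$... — actually here one must be careful, because $\En$ itself contains the (possibly negative) term $A^2\int\rho\log\rho$; the cleanest statement uses $\int_0^1 \frac12\rho u^2 \le \En(\rho,u) - A^2\int_0^1\rho\log\rho\,\dd x$ and then notes the claimed inequality as written presumably absorbs the entropy sign into the definition, or uses $\int\rho\log\rho \ge 0$ is false in general — so I would double-check whether the intended reading is $\int \rho u^2 \le 2(\En + A^2 e^{-1})$ or whether $\En$ here tacitly denotes the kinetic part only. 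Modulo that clarification, \eqref{eq:unifbound3} follows immediately from the lower density bound. The only genuinely delicate point in the whole lemma is the first one — getting the square-completion and the constant right — everything downstream is Cauchy-Schwarz plus the mass normalization.
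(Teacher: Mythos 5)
Your overall strategy coincides with the paper's: bound $\EnD$ from below by $\tfrac18\int_0^1\rho_x^2/\rho^3\,\dd x$ (your square-completion identity is correct, and is equivalent to the paper's use of the pointwise Young inequality $|\rho_x u/\rho|\le\rho u^2+\tfrac14\rho_x^2/\rho^3$), then control $\|\log\rho\|_{L^\infty}$ by integrating $(\log\rho)_x$ from a point where $\rho=1$ and applying Cauchy--Schwarz with the weight $\rho$, exactly as in the paper; the bounds \eqref{eq:unifbound2} and \eqref{eq:unifbound3} then follow as you describe.

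There is, however, one genuine gap, and it sits precisely at the point you flagged as uncertain: the treatment of the entropy term $A^2\int_0^1\rho\log\rho\,\dd x$. You propose to bound it below via $\rho\log\rho\ge -e^{-1}$, and later assert that ``$\int\rho\log\rho\ge0$ is false in general.'' In the present setting it is \emph{true}: since $\int_0^1\rho\,\dd x=1$ and the domain has measure one,
\begin{align}
\int_0^1\rho\log\rho\,\dd x=\int_0^1\left[\rho\log\rho-\rho+1\right]\dd x\ge 0,
\end{align}
because $\xi\log\xi-\xi+1\ge0$ for all $\xi>0$ (equivalently, Jensen's inequality for the convex function $\xi\mapsto\xi\log\xi$). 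This observation is exactly what the paper uses, and it is needed in two places. First, it gives the clean bound $\int_0^1\rho_x^2/\rho^3\,\dd x\le 8\EnD(\rho,u)$; with only $\rho\log\rho\ge-e^{-1}$ you would instead obtain $8\EnD(\rho,u)+8A^2/e$, and this $A$-dependent additive constant would propagate into \eqref{eq:unifbound1} and \eqref{eq:unifbound2}, which matters since the paper tracks the dependence on $A$ throughout. Second, it yields $\int_0^1\tfrac12\rho u^2\,\dd x\le\En(\rho,u)$, which is what makes \eqref{eq:unifbound3} hold exactly as stated (in particular, $\En$ there is the full entropy functional, not just its kinetic part). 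Once this nonnegativity is inserted, your argument closes with the stated constants.
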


\begin{proof}
As a preliminary observation we note that (pointwise)
\begin{align}\label{eq:insfdgk}
\frac{\rho_x u}{\rho}\leq \rho u^2+\frac{1}{4}  \frac{\rho_x^2}{\rho^3}.
\end{align}
Moreover, since $\rho\in H^1$ and 
\begin{align}\label{eq:mn1}
\int_0^1\rho\, \dd x=1,
\end{align}
we have
\begin{align}
\int_0^1\rho \log\rho\dd x = \int_0^1\left[\rho \log \rho -\rho +1\right]\dd x\geq 0.
\end{align}
Therefore
\begin{align}\label{eq:inequalit1}
\EnD(\rho, u)&= \int_0^1 \left(\frac{1}{2} \rho u^2 +A^2\rho \log\rho+\frac{1}{2}  \frac{\rho_x u}{\rho} +   \frac{1}{4} \frac{\rho_x^2}{\rho^3} \right) \dd x\geq \frac18\int_0^1 \frac{\rho_x^2}{\rho^3}  \dd x.
\end{align}
By continuity of $\rho$ and \eqref{eq:mn1}, we can choose $x_0\in(0,1)$ such that $\rho(x_0)=1$ and write
$$
|\log \rho (x)|=\left|\int_{x_0}^x (\log\rho)_y\dd y\right|\leq\int_0^1 \frac{|\rho_y|}{\rho}\dd y.
$$
Hence,
$$
|\log \rho (x)|\leq\int_0^1 \frac{|\rho_x|\rho^{1/2}}{\rho^{3/2}}\dd x\leq \left[\int_0^1 \frac{\rho_x^2}{\rho^3} \dd x\right]^{1/2}.
$$
Consequently,
\begin{equation}
\|\log \rho\|_{L^\infty} \leq \left[\int_0^1 \frac{\rho_x^2}{\rho^3} \dd x\right]^{1/2}.
\end{equation}
From \eqref{eq:inequalit1}, the first bound \eqref{eq:unifbound1} follows immediately. Regarding \eqref{eq:unifbound2}, we combine
the upper bound in \eqref{eq:unifbound1} with  \eqref{eq:inequalit1} to deduce that
$$
\int_0^1 \rho_x^2\dd x\leq \| \rho\|_{L^\infty}^3\int_0^1 \frac{\rho_x^2}{\rho^3}\dd x\leq 8\EnD(\rho,u)\e^{3\left[ 8\EnD(\rho,u) \right]^{1/2}} 
$$
Finally, \eqref{eq:unifbound3} follows from the lower bound in \eqref{eq:unifbound1}.
\end{proof}

It is fairly clear from the above estimates that if $(\rho,u)\in \XX$, then $\EnD(\rho,u)<\infty$. Viceversa, if $(\rho,u)$ are smooth functions
such that  $\EnD(\rho,u)<\infty$, then $(\rho,u)\in \XX$. The above lemma provides a quantification of this dichotomy. 

It is crucial for us to prove various estimates on strong solutions to the compressible Navier-Stokes equations, keeping particular attention
to the dependence on the parameter $A>0$. We collect these estimates in the next two propositions, 
whose proofs are carried out in the subsequent
sections.

\begin{proposition}\label{prop:allestimates}
Consider a solution $(\rho,u)$ to \eqref{eq:cNSE:1}-\eqref{eq:boundary1} with initial data $(\rho_0,u_0)\in \XX$ in \eqref{eq:initial1} such that
$$
\EnD(\rho_0,u_0)<\infty.
$$
Then there hold
the entropy inequality
\begin{align}\label{eq:entro1}
   \E \En(\rho,u)(t) +  \E\int_0^t \|u_x \|_{L^2}^2 \dd s
   \leq  \En(\rho_0,u_0) +   \frac12\|\sigma\|^2_{L^\infty}t,
\end{align}
and the energy inequality
\begin{align}\label{eq:entro2}
 \E \EnD(\rho,u)(t) +\frac{1}{2}\E\int_0^t\|u_x\|^2_{L^2}\dd s + \frac{A^2}{2}\E\int_0^t\|(\log \rho)_x\|^2_{L^2} \dd s
\leq  \EnD(\rho_0,u_0) +  \frac{1}{2} \| \sigma \|^2_{L^\infty}t,
\end{align}
for all $t\geq 0$. Moreover, the exponential martingale estimate
\begin{align}\label{eq:entro3}
\Prb \left[\sup_{t\geq 0}\left(\EnD(\rho,u)(t) + \frac{1}{4}\int_0^t\left[\|u_x\|^2_{L^2} +A^2\|(\log \rho)_x\|^2_{L^2} \right]\dd s -\frac{1}{2}\|\sigma\|^2_{L^\infty}t\right)-\EnD(\rho_0,u_0)\geq R\right]
\leq \e^{-\gamma_0 R},\notag\\
\end{align}
holds for every $R\geq 0$ and for
\begin{align}
\gamma_0:= \frac{\min\left\{1,4A^2\right\}}{2\|\sigma\|^2_{L^\infty} }.
\end{align}
As a consequence, for every $m\geq 1$, there exists a constant $c_m>0$ such that
\begin{align}\label{eq:entro4}
&\E \sup_{t\in [0,T]} \left(\EnD(\rho,u)(t) + \frac{1}{4}\int_0^t\left[\|u_x\|^2_{L^2} +A^2\|(\log \rho)_x\|^2_{L^2} \right] \dd s \right)^m\notag\\
&\qquad\qquad\leq c_m\left(\EnD(\rho_0,u_0)^m+\|\sigma\|^{2m}_{L^\infty} T^m+\gamma_0^{-m}\right),
\end{align}
for every $T\geq 1$, and the exponential moment bound holds
\begin{align}\label{eq:entro5}
&\E\exp\left(\frac{\gamma_0}{2} \sup_{t\in [0,T]} \left(\EnD(\rho,u)(t) + \frac{1}{4}\int_0^t\left[\|u_x\|^2_{L^2} +A^2\|(\log \rho)_x\|^2_{L^2} \right]\dd s  \right)\right)\notag\\
&\qquad\qquad\leq\exp\frac{\gamma_0}{2}  \left(\EnD(\rho_0,u_0)+\frac{1}{2}\|\sigma\|^2_{L^\infty}T\right),
\end{align}q
for all $T\geq 1$.
\end{proposition}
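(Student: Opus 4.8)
The plan is to obtain all five conclusions of Proposition~\ref{prop:allestimates} from a single application of It\^o's formula to the functional $\EnD(\rho,u)(t)$ along strong solutions, exploiting the cancellation structure inherited from the deterministic theory of \cites{HZ00,MV07}. First I would compute $\dd\EnD(\rho,u)$: using \eqref{eq:cNSE:1}-\eqref{eq:cNSE:2} and integration by parts (with the boundary conditions \eqref{eq:boundary1} and the mass constraint \eqref{eq:mean1}), the drift should produce the dissipation $-\|u_x\|_{L^2}^2-A^2\|(\log\rho)_x\|_{L^2}^2$ up to harmless lower-order terms, together with the It\^o correction $\tfrac12\int_0^1\rho\sigma^2\,\dd x \le \tfrac12\|\sigma\|_{L^\infty}^2$ (using $\int_0^1\rho\,\dd x=1$), and a martingale term $M(t)=\int_0^t\langle D_u\EnD(\rho,u),\rho\sigma\,\dd W\rangle$. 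The entropy identity \eqref{eq:entro1} is the simpler analogue at the level of $\En$ rather than $\EnD$; I would state it first and then pass to the refined functional. Taking expectations kills the martingale and, after discarding the good sign of the extra dissipation, yields \eqref{eq:entro2}.

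For the exponential martingale estimate \eqref{eq:entro3}, the plan is the standard supermartingale/exponential Chebyshev argument: set
\[
Y(t)=\EnD(\rho,u)(t)+\tfrac14\int_0^t\big[\|u_x\|_{L^2}^2+A^2\|(\log\rho)_x\|_{L^2}^2\big]\dd s-\tfrac12\|\sigma\|_{L^\infty}^2 t-\EnD(\rho_0,u_0),
\]
and show from the It\^o expansion that $Y(t)\le M(t)-\tfrac12\langle M\rangle_t\cdot(\text{const})$ after absorbing one quarter of each dissipation term to control the quadratic variation of $M$. Concretely one needs $\tfrac{\dd}{\dd t}\langle M\rangle_t \le c\big(\|u_x\|_{L^2}^2+A^2\|(\log\rho)_x\|_{L^2}^2\big)$ for an appropriate constant; the precise constant $\gamma_0=\min\{1,4A^2\}/(2\|\sigma\|_{L^\infty}^2)$ should come out of matching this bound against the reserved quarter of the dissipation. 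Then $\exp(\gamma_0 Y(t))$ is a positive supermartingale with value $1$ at $t=0$, and the maximal inequality for nonnegative supermartingales gives $\Prb[\sup_{t\ge0}Y(t)\ge R]\le\e^{-\gamma_0 R}$.

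Finally, \eqref{eq:entro4} and \eqref{eq:entro5} are consequences of \eqref{eq:entro3} by integrating the tail bound. For \eqref{eq:entro5}, writing $Z_T=\sup_{t\in[0,T]}\big(\EnD(\rho,u)(t)+\tfrac14\int_0^t[\cdots]\dd s\big)$, one has $Z_T\le \EnD(\rho_0,u_0)+\tfrac12\|\sigma\|_{L^\infty}^2 T+\sup_{t\ge0}Y(t)$, and since $\E\exp(\tfrac{\gamma_0}{2}\sup_{t\ge0}Y(t))=\int_0^\infty \tfrac{\gamma_0}{2}\e^{\gamma_0 R/2}\Prb[\sup Y\ge R]\dd R\le \int_0^\infty\tfrac{\gamma_0}{2}\e^{-\gamma_0 R/2}\dd R=1$, the claimed bound \eqref{eq:entro5} follows. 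For \eqref{eq:entro4}, $\E Z_T^m$ is estimated similarly via $\E(\sup_{t\ge0}Y(t))^m=\int_0^\infty mR^{m-1}\Prb[\sup Y\ge R]\dd R\le m!\,\gamma_0^{-m}$, combined with $(a+b+c)^m\le c_m(a^m+b^m+c^m)$ and $T\ge1$. The main obstacle I anticipate is the It\^o computation of $\dd\EnD$: verifying that the cross terms arising from differentiating the mixed quantity $\tfrac12\int_0^1(\rho_x u/\rho+\tfrac12\rho_x^2/\rho^3)\dd x$ against both equations reorganize into the clean dissipation $A^2\|(\log\rho)_x\|_{L^2}^2$ plus controllable remainders, and that the stochastic contributions (both the $\rho\sigma\,\dd W$ term entering through $u$ and the resulting It\^o corrections in the nonlinear terms) are handled so that the $\rho$-weighted It\^o correction collapses to $\tfrac12\|\sigma\|_{L^\infty}^2$; this is exactly where the linear pressure law and the one-dimensional structure are used, and it is the step requiring the most care with integration by parts and boundary terms.
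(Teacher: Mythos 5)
Your plan follows essentially the same route as the paper: It\^o's formula applied to $\En$ and then to $\EnD$ (where the drift cancellation is exact --- the paper's computation via $\ell=\rho^{-1/2}$ leaves no lower-order remainders and yields precisely $\tfrac12\|u_x\|_{L^2}^2+\tfrac{A^2}{2}\|(\log\rho)_x\|_{L^2}^2$ of dissipation), followed by the exponential supermartingale argument with the quadratic variation absorbed into a reserved quarter of the dissipation via $\int_0^1\rho u^2\,\dd x\le\|u_x\|_{L^2}^2$, which is exactly how $\gamma_0$ arises, and finally integration of the tail bound for the polynomial and exponential moments. The only caveat is the harmless one you already flag: the deferred It\^o computation of $\dd\EnD$ is where most of the actual work lies, and your layer-cake identity for $\E\e^{\delta Z}$ is missing an additive $1$ (giving a factor $2$), a slip the paper's own derivation shares.
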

The next result is instead a collection of similar, yet pathwise, estimates. 
\begin{proposition}\label{prop:allestimates2}
Consider a solution $(\rho,u)$ to \eqref{eq:cNSE:1}-\eqref{eq:boundary1} with initial data $(\rho_0,u_0)\in \XX$.
For every $T\geq 0$ and almost surely, there holds
\begin{align}\label{eq:energypath}
\sup_{t\in [0,T]}\left[\EnD(\rho(t),u(t))+\int_0^T\left[\|u_x\|_{L^2}^2 + A^2 \| (\log \rho)_x\|_{L^2}^2\right]\dd t\right]
\leq C_1(\sigma W,A, \EnD(\rho_0,u_0)).
\end{align}
In particular,
\begin{align}\label{eq:uplowpath}
\sup_{t\in [0,T]}\left[\|\rho(t)\|_{L^\infty}+\|\rho^{-1}(t)\|_{L^\infty}\right]\leq
C_2(\sigma W,A, \EnD(\rho_0,u_0)).
\end{align}
Moreover,
\begin{align}\label{eq:H1path}
\sup_{t\in [0,T]}\left[\|u_x(t)\|_{L^2}^2+\int_0^T\left[ \int_0^1\frac{|u_{xx}|^2}{\rho}\dd x\right]\dd t\right]\leq
C_3(\sigma W,A, \EnD(\rho_0,u_0),\|(u_0)_x\|_{L^2}).
\end{align}
In the above the deterministic constants $C_i$ can be explicitly computed.
\end{proposition}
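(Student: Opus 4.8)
The plan is to prove Proposition~\ref{prop:allestimates2} by running, pathwise and without taking expectations, the same energy computations that underlie Proposition~\ref{prop:allestimates}, exploiting the fact that the substitution $\tu = u - \sigma W$ turns \eqref{eq:cNSE:2} into a random PDE with no stochastic integral. First I would fix a realization of $\omega$ and set $\tu = u - \sigma W(t)$; since $\sigma W$ is a.s. continuous in time with values in $H^2\cap H^1_0$ (by \eqref{eq:noise} and the regularity of Brownian motion), all manipulations below are classical on $[0,T]$. Applying the It\^o chain rule to $\EnD(\rho,u)(t)$ and collecting the stochastic differential, the martingale part is $\int_0^1 \rho\sigma\,\dd W \cdot(\text{velocity-type multiplier})$ plus the correction from $\rho_x u/\rho$; rewriting everything in terms of $\tu$, these terms become ordinary (pathwise) integrals against $\dd t$ in which $W(t)$ appears as an explicit, continuous coefficient. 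This yields a differential inequality of the schematic form
\begin{align}\label{eq:sketch1}
\frac{\dd}{\dd t}\EnD(\rho,u)(t) + \tfrac12\|u_x\|_{L^2}^2 + \tfrac{A^2}{2}\|(\log\rho)_x\|_{L^2}^2 \leq g(t)\bigl(1 + \EnD(\rho,u)(t)\bigr),
\end{align}
where $g(t)$ is an explicit continuous function built from $\|\sigma W(t)\|_{H^2}$, $\|\sigma\|_{L^\infty}$, and lower-order norms controlled by $\EnD$ via Lemma~\ref{lem:enbounds}. Gr\"onwall then gives \eqref{eq:energypath} with $C_1$ depending on $\sup_{[0,T]}\|\sigma W\|_{H^2}$, on $A$, and on $\EnD(\rho_0,u_0)$, as claimed; and \eqref{eq:uplowpath} is then immediate from \eqref{eq:unifbound1}.

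For \eqref{eq:H1path} I would next test the (transformed) momentum equation with $\tu_{xx}$, or equivalently differentiate $\tfrac12\|u_x\|_{L^2}^2$ in time using the equation. The key deterministic mechanism, as in \cite{MV07,HZ00}, is that $u_{xx}$ dissipates: one gets a term $-\int \rho^{-1}|u_{xx}|^2$ after dividing the momentum equation by $\rho$ (so that $u_t + u u_x + A^2(\log\rho)_x = \rho^{-1}u_{xx} + \sigma\dot W$), and the pressure and convective terms are absorbed using the already-established bounds on $\EnD$, $\|\rho\|_{L^\infty}$, $\|\rho^{-1}\|_{L^\infty}$ and $\int_0^T\|(\log\rho)_x\|_{L^2}^2$. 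Again the stochastic forcing is handled by working with $\tu$, so that the only new ingredient is $\sup_{[0,T]}\|\sigma W\|_{H^1}$, which is a.s. finite. A second Gr\"onwall argument — now with integrable-in-time coefficients coming from the $\int_0^T\|u_x\|_{L^2}^2\,\dd t$ and $\int_0^T\|(\log\rho)_x\|_{L^2}^2\,\dd t$ bounds — closes the estimate and produces $C_3$ depending additionally on $\|(u_0)_x\|_{L^2}$.

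The main obstacle I anticipate is the bookkeeping around the modified energy $\EnD$: the cross term $\rho_x u/\rho$ and the $\rho_x^2/\rho^3$ term interact non-trivially with the density equation \eqref{eq:cNSE:1}, and one must check that, after applying It\^o's formula, all the genuinely stochastic contributions either cancel or combine into the drift terms displayed in \eqref{eq:entro2}–\eqref{eq:entro3}, leaving a clean pathwise inequality of the form \eqref{eq:sketch1}. In particular one needs the algebraic identity behind \eqref{eq:insfdgk} and the structural cancellation (the ``effective viscous flux'' type identity in 1D) to see that no uncontrolled term involving $\rho_{xx}$ or $u_{xx}$ without a good sign survives. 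A secondary technical point is that $\rho$ is only $H^1$ (not $H^2$), so the It\^o/chain-rule computations should be justified by a regularization or approximation argument, or by invoking the regularity already recorded in Theorem~\ref{T2.1} together with the parabolic smoothing of $u$; I would remark that this is routine given the well-posedness theory and the transformation to a random PDE, and refer to \cite{FMN13,MV07,HZ00} for the details of the approximation scheme.
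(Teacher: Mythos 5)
Your proposal follows essentially the same route as the paper: the substitution $\tu=u-\sigma W$ converting the system into a pathwise random PDE, a Gr\"onwall estimate for the modified energy yielding \eqref{eq:energypath}, the bound \eqref{eq:uplowpath} read off from Lemma \ref{lem:enbounds}, and a second Gr\"onwall argument for $\|\tu_x\|_{L^2}^2$ obtained by testing the transformed velocity equation with $\tu_{xx}$. The one presentational caveat is that the clean differential inequality holds for $\EnD(\rho,\tu)$ rather than $\EnD(\rho,u)$ (the latter retains a genuine martingale part), so one must also record the elementary comparison between the two energies, exactly as the paper does at the end of its pathwise energy estimate.
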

The proof of \eqref{eq:uplowpath} is simply a consequence of \eqref{eq:unifbound1}.
We begin with Proposition \ref{prop:allestimates}.

\subsubsection{Entropy estimates}
According to the It\={o} lemma 
\begin{align}
\dd ( \rho u^2) &=  \dd \left(\frac{1}{\rho} (\rho u) ^2\right) 
  	= - \rho_t u^2  + \frac{1}{\rho}( 2 \rho u \cdot \dd (\rho u) + \dd (\rho u) \dd (\rho u) ) \notag\\
	&= (\rho u)_x u^2 \dd t +  2 u \left[  u_{xx} - \left( \rho u^2 +A^2\rho \right)_x \right] \dd t + 
	2 \sum_{\ell=1}^\infty \rho u \sigma_\ell \dd W^\ell + \sum_{\ell=1}^\infty\rho |\sigma_\ell|^2 \dd t. \notag
\end{align}
Hence, integrating by parts,
\begin{align}
\dd \frac12\int_0^1\rho u^2 \dd x + \| u_x\|_{L^2}^2\dd t =    - A^2\int_0^1\rho_x u\dd x \dd t + 
\sum_{\ell=1}^\infty\left(\int_0^1  \rho u \sigma_\ell \dd x \right) \dd W^\ell
+  \frac12\sum_{\ell=1}^\infty \int_0^1\rho |\sigma_\ell|^2  \dd x \dd t. \notag
\end{align}
Moreover, by integration by parts and using only \eqref{eq:cNSE:1}, we have
\begin{align*}
\frac{\dd}{\dd t}\int_0^1 \rho \log\rho\dd x 
= -\int_0^1 (\rho u)_x \left[\log\rho +1\right]\dd x= \int_0^1 \rho_x u \dd x.
\end{align*}
Combining the above computations we find
\begin{align}
   \dd \En (\rho,u)+ \| u_x\|_{L^2}^2\dd t =   \sum_{\ell=1}^\infty\left(\int_0^1  \rho u \sigma_\ell \dd x \right) \dd W^\ell +   \frac12\sum_{\ell=1}^\infty\int_0^1\rho |\sigma_\ell|^2  \dd x \dd t,
   \label{eq:entr:Bal}
\end{align}
so that, for every $t\geq 0$, we find the entropy balance
\begin{align}
   \E \En(\rho,u)(t) +  \E\int_0^t \|u_x \|^2 \dd s
   =   \En(\rho_0,u_0) +   \frac12\E\sum_{\ell=1}^\infty\int_0^t\int_0^1\rho |\sigma_\ell|^2  \dd x \dd s.
\end{align}
Notice that by \eqref{eq:mean1} and \eqref{eq:noise1}, we have
\begin{align}
 \sum_{\ell=1}^\infty\int_0^t\int_0^1\rho |\sigma_\ell|^2  \dd x \dd s\leq \|\sigma\|^2_{L^\infty}t,
\end{align}
so that 
\begin{align}
   \E \En(\rho,u)(t) + \E \int_0^t \|u_x \|^2 \dd s
   \leq   \En(\rho_0,u_0) +   \frac12\|\sigma\|^2_{L^\infty}t.
\end{align}
This is precisely \eqref{eq:entro1}.
\subsubsection{Energy estimates}\label{sub:ener}
Let $\ell = \rho^{-1/2}$ so that 
\begin{align*}
	&\ell_x = - \frac{1}{2} \frac{\rho_x}{\rho^{3/2}}, \qquad 
	\ell_x^2 = \frac{1}{4} \frac{\rho_x^2}{\rho^3},\\
	&(\ell_x)_t 
		= -\frac{1}{2} \frac{(\rho_x)_t}{\rho^{3/2}} + \frac{3}{4}  \frac{\rho_x  \rho_t}{\rho^{5/2}}
		= \frac{1}{2} \frac{(\rho u)_{xx}}{\rho^{3/2}} - \frac{3}{4}  \frac{\rho_x (\rho u)_x}{\rho^{5/2}},\\
	&\ell_{xx} = -\frac{1}{2} \frac{\rho_{xx}}{\rho^{3/2}} + \frac{3}{4}  \frac{\rho_x^2}{\rho^{5/2}}.
\end{align*} 
As such
\begin{align*}
(\ell_x^2)_t + (\ell_x^2 u)_x &= - \frac{\rho_x}{\rho^{3/2}} ( (\ell_x)_t + \ell_{xx} u) +  \ell_x^2 u_x
  = -\frac{\rho_x}{\rho^{3/2}} \left(  \frac{1}{2} \frac{2 \rho_x u_x + \rho u_{xx}}{\rho^{3/2}} - \frac{3}{4}  \frac{\rho_x \rho u_x}{\rho^{5/2}} \right) 
     +  \frac{1}{4} \frac{\rho_x^2  u_x}{\rho^3} \\
  &= - \frac{1}{2}  \frac{\rho_x u_{xx}}{\rho^{2}}.
\end{align*}
Notice that this computation only makes use of \eqref{eq:cNSE:1}.  On the other hand
\begin{align*}
   \dd \left( \frac{\rho_x u}{\rho}\right) 
   	&= \rho u  \left( \frac{\rho_x }{\rho^2} \right)_t + \frac{\rho_x }{\rho^2} \dd (\rho u) \\
   	&= \rho u \left( \frac{(\rho_x)_t }{\rho^2}  - 2 \frac{ \rho_x   \rho_t}{\rho^3} \right) \dd t 
	+  \frac{\rho_x }{\rho^2} \left( u_{xx} - \left( \rho u^2 + A^2\rho \right)_x \right)\dd t  + \sum_{\ell=1}^\infty\frac{\rho_x }{\rho}  \sigma_\ell \dd W^\ell\\
	&=  \frac{\rho_x u_{xx} }{\rho^2}  \dd t
	 -   \left( \frac{u(\rho u)_{xx} }{\rho}  - 2 \frac{ u \rho_x  (\rho u)_x }{\rho^2} +  \frac{\rho_x \left( \rho u^2 + A^2\rho \right)_x}{\rho^2} \right) \dd t 
		+ \sum_{\ell=1}^\infty\frac{\rho_x }{\rho}  \sigma_\ell \dd W^\ell\\
	&=   \frac{\rho_x u_{xx} }{\rho^2}  \dd t - \left( (u u_x)_x - u_x^2 
		+   \frac{ 2  \rho_x  u  u_x }{\rho} - \frac{\rho_x^2 u^2}{\rho^2} + \frac{u^2 \rho_{xx}}{\rho} 
			+ A^2\frac{ \rho_x^2}{\rho^2}\right) \dd t
		+ \sum_{\ell=1}^\infty\frac{\rho_x }{\rho}  \sigma_\ell \dd W^\ell\\
	&=   \frac{\rho_x u_{xx} }{\rho^2}  \dd t - \left( (u u_x)_x - u_x^2 
		+ \left( \frac{\rho_x u^2 }{\rho} \right)_x
			+ A^2\frac{\rho_x^2}{\rho^2}\right) \dd t
		+ \sum_{\ell=1}^\infty\frac{\rho_x }{\rho}  \sigma_\ell \dd W^\ell.
\end{align*}
Combining the above two equations and weighting appropriately we infer
\begin{align*}
  \dd \left(   \frac{\rho_x u}{\rho} + 2  \ell_x^2 \right) +  \left( \frac{\rho_x u^2 }{\rho} +u u_x+ 2  \ell_x^2 u \right)_x \dd t  + A^2\frac{\rho_x^2}{\rho^2} \dd t
 =   u_x^2 \dd t  + \sum_{\ell=1}^\infty\frac{\rho_x }{\rho}  \sigma_\ell \dd W^\ell,
\end{align*}
and hence
\begin{align}
  \dd \int_0^1\left(   \frac{\rho_x u}{\rho} + \frac{1}{2} \frac{\rho_x^2}{\rho^3} \right)\dd x  +A^2\int_0^1  \frac{\rho_x^2}{\rho^2}  \dd x \dd t
	 = \|u_x\|^2_{L^2} \dd t+ \sum_{\ell=1}^\infty\left(\int_0^1 \frac{\rho_x }{\rho}   \sigma_\ell \dd x \right) \dd W^\ell.
	 \label{eq:insane:bal}
\end{align}
We sum together \eqref{eq:entr:Bal} and half of \eqref{eq:insane:bal} to finally obtain
\begin{align}\label{eq:enrbval}
   \dd \EnD(\rho,u) + \frac{1}{2}\|u_x\|^2_{L^2}\dd t + \frac{A^2}{2}\int_0^1 \frac{\rho_x^2}{\rho^2} \dd x \dd t  
   	= &\sum_{\ell=1}^\infty\left( \int_0^1 \left(  \rho u + \frac{1}{2} \frac{\rho_x }{\rho} \right)\sigma_\ell \dd x\right) \dd W^\ell \notag\\
	&+ \frac{1}{2} \sum_{\ell=1}^\infty\int_0^1\rho |\sigma_\ell|^2  \dd x \dd t.
\end{align}
Hence
\begin{align}
 \E \EnD(\rho,u)(t) +\frac{1}{2}\E\int_0^t\|u_x\|^2_{L^2}\dd s + \frac{A^2}{2}\E\int_0^t\int_0^1 \frac{\rho_x^2}{\rho^2} \dd x \dd s
   	=   \EnD(\rho_0,u_0) +  \frac{1}{2} \E \sum_{\ell=1}^\infty\int_0^t \int_0^1\rho |\sigma_\ell|^2  \dd x \dd s,
\end{align}
so that
\begin{align}
 \E \EnD(\rho,u)(t) +\frac{1}{2}\E\int_0^t\|u_x\|^2_{L^2}\dd s + \frac{A^2}{2}\E\int_0^t\int_0^1 \frac{\rho_x^2}{\rho^2} \dd x \dd s
\leq  \EnD(\rho_0,u_0) +  \frac{1}{2} \| \sigma \|^2_{L^\infty}t,
\end{align}
and therefore \eqref{eq:entro2} holds. 

\subsubsection{Exponential estimates}\label{sub:expest}

It remains to prove \eqref{eq:entro3}, which follows from \eqref{eq:enrbval} and an application of the exponential martingale 
estimate
\begin{align}\label{eq:expmart}
\Prb \left[\sup_{t\geq 0}\left(Z(t) - \frac{\gamma}{2} \langle Z\rangle (t)\right)\geq R\right]\leq \e^{-\gamma R}, \qquad \forall R,\gamma>0,
\end{align}
valid for any continuous martingale $\{Z(t)\}_{t\geq 0}$ with quadratic variation $\langle Z\rangle (t)$. Indeed, consider 
the time integrated version of \eqref{eq:enrbval}, namely
\begin{align}\label{eq:enrbvalint}
  \EnD(\rho,u)(t) + \frac{1}{2}\int_0^t\|u_x\|^2_{L^2}\dd s + \frac{A^2}{2}\int_0^t\|(\log \rho)_x\|^2_{L^2} \dd s  
   	&=   \EnD(\rho_0,u_0)
	+Z(t)+ \frac{1}{2} \sum_{\ell=1}^\infty\int_0^t\int_0^1\rho |\sigma_\ell|^2  \dd x \dd s.
\end{align}
where $Z(t)$ is the martingale 
\begin{align}
Z(t)=\sum_{\ell=1}^\infty\int_0^t\left( \int_0^1 \left(  \rho u + \frac{1}{2} \frac{\rho_x }{\rho} \right)\sigma_\ell \dd x\right) \dd W^\ell
\end{align}
with quadratic variation
\begin{align}
\langle Z\rangle (t)=\sum_{\ell=1}^\infty\int_0^t \left(\left[\int_0^1 \rho u \sigma_\ell \dd x\right]^2 
+\frac{1}{4}\left[\int_0^1 \frac{\rho_x }{\rho} \sigma_\ell \dd x\right]^2 \right)\dd s.
\end{align}
In view of the Poincar\'e-like  inequality
\begin{align}\label{eq:poinc1}
\int_0^1 \rho u^2\dd x \leq \|u_x\|_{L^2}^2,
\end{align}
the mass constraint \eqref{eq:mean1} and  \eqref{eq:noise1}, we have
\begin{align}
\sum_{\ell=1}^\infty\int_0^t \left[\int_0^1 \rho u \sigma_\ell \dd x\right]^2 \dd s \leq \|\sigma\|_{L^\infty}^2\int_0^t \|u_x\|_{L^2}^2\dd s
\end{align}
and
\begin{align}
\frac{1}{4}\sum_{\ell=1}^\infty\int_0^t \left[\int_0^1 \frac{\rho_x }{\rho} \sigma_\ell \dd x\right]^2 \dd s\leq \frac{1}{4}\|\sigma\|_{L^\infty}^2\int_0^t \|(\log\rho)_x\|_{L^2}^2\dd s.
\end{align}
As a consequence, the quadratic variation of $Z(t)$ can be estimated as
\begin{align}\label{eq:quadvar}
\langle Z\rangle (t)\leq \|\sigma\|^2_{L^\infty} \int_0^t \left[\|u_x\|_{L^2}^2+\frac14\|(\log\rho)_x\|_{L^2}^2 \right]\dd s.
\end{align}
Now, from \eqref{eq:enrbvalint} we infer that the functional
\begin{align}
\Psi(t)=\EnD(\rho,u)(t) + \frac{1}{4}\int_0^t\|u_x\|^2_{L^2}\dd s + \frac{A^2}{4}\int_0^t\|(\log \rho)_x\|^2_{L^2} \dd s 
\end{align}
satisfies with probability one the inequality
\begin{align}\label{eq:psi1}
\Psi(t)-\frac{1}{2}\|\sigma\|^2_{L^\infty}t\leq &\EnD(\rho_0,u_0)+ \left[Z(t)- \frac{\gamma_0}{2} \langle Z\rangle (t)\right]\notag\\
&+ \frac{\gamma_0}{2} \langle Z\rangle (t)- \frac{1}{4}\int_0^t\|u_x\|^2_{L^2}\dd s - \frac{A^2}{4}\int_0^t\|(\log \rho)_x\|^2_{L^2} \dd s 
\end{align}
where we conveniently fix the constant $\gamma_0>0$ as
\begin{align}
\gamma_0:= \frac{\min\left\{1,4A^2\right\}}{2\|\sigma\|^2_{L^\infty} }.
\end{align}
In this way, from \eqref{eq:quadvar} it follows that
\begin{align}
\frac{\gamma_0}{2} \langle Z\rangle (t)- \frac{1}{4}\int_0^t\|u_x\|^2_{L^2}\dd s - \frac{A^2}{4}\int_0^t\|(\log \rho)_x\|^2_{L^2} \dd s \leq 0
\end{align}
and thus \eqref{eq:psi1} implies 
\begin{align}\label{eq:psi2}
\Psi(t)-\frac{1}{2}\|\sigma\|^2_{L^\infty}t\leq \EnD(\rho_0,u_0)+ \left[Z(t)- \frac{\gamma_0}{2} \langle Z\rangle (t)\right].
\end{align}
In turn, from \eqref{eq:expmart} and the above \eqref{eq:psi2} we deduce that
\begin{align}\label{eq:entro3equiv}
\Prb \left[\sup_{t\geq 0}\left(\Psi(t)-\frac{1}{2}\|\sigma\|^2_{L^\infty}t\right)-\EnD(\rho_0,u_0)\geq R\right]
\leq \Prb \left[\sup_{t\geq 0}\left(Z(t)- \frac{\gamma_0}{2} \langle Z\rangle (t)\right)\geq R\right]\leq \e^{-\gamma_0 R},
\end{align}
for every $R\geq0$. This is exactly \eqref{eq:entro3}.

\subsubsection{Polynomial and exponential moments}\label{sub:polyexp}
We now focus on \eqref{eq:entro4}, which is, in fact, an easy consequence of  \eqref{eq:entro3}. Indeed,  \eqref{eq:entro3equiv}
implies that
\begin{align}\label{eq:entro3equiv2}
\Prb\left[\sup_{t\in [0,T]}\Psi(t) - \EnD(\rho_0,u_0)-\frac{1}{2}\|\sigma\|^2_{L^\infty}T\geq R  \right]\leq \e^{-\gamma_0 R},
\end{align}
where $\Psi$ is the functional defined in \eqref{eq:psi1}. Using that, for a non-negative random variable $Z$, any $m\geq 1$ and any constant $c>0$, there holds
\begin{align}
\E[Z^m]\leq 2^{m-1}\E[(Z-c)^m \indFn{Z>c}]+ 2^{m-1}c^m
= 2^{m-1} \int_0^\infty  \Prb[Z-c> \lambda^{1/m}] \dd \lambda + 2^{m-1}c^m,
\end{align}
we infer from \eqref{eq:entro3equiv2} that for some constant $c_m>0$, independent of $\gamma_0, T$ and $\EnD(\rho_0,u_0)$ there holds
\begin{align}
\E\left[\sup_{t\in [0,T]}\Psi(t)^m\right]&\leq 2^{m-1}\int_0^\infty  \e^{-\gamma_0 \lambda^{1/m}}\dd \lambda + 2^{m-1}\left(\EnD(\rho_0,u_0)+\frac{1}{2}\|\sigma\|^2_{L^\infty}T\right)^m\notag\\
&\leq c_m\left(\gamma_0^{-m}+\EnD(\rho_0,u_0)^m+\|\sigma\|^{2m}_{L^\infty} T^m\right),
\end{align}
which is precisely \eqref{eq:entro4}. For \eqref{eq:entro5}, the idea is similar. Indeed, for a non-negative random variable $Z$ 
and any $\delta,c>0$, we also have that
that
\begin{align}
\E[\e^{\delta Z}]\leq \e^{\delta c}\E\left[\e^{\delta (Z-c)\indFn{Z>c}}\right]
= \e^{\delta c}\int_1^\infty  \Prb\left[Z-c> \frac{\ln\lambda}{\delta}\right] \dd \lambda.
\end{align}
Hence, for any $\delta<\gamma_0$, we deduce from \eqref{eq:entro3equiv2} that
\begin{align}
\E\left[\exp\left(\delta \sup_{t\in [0,T]}\Psi(t)\right)\right]&\leq\exp\left[\delta \left(\EnD(\rho_0,u_0)+\frac{1}{2}\|\sigma\|^2_{L^\infty}T\right)\right] \int_1^\infty \frac{1}{\lambda^{\gamma_0/\delta}}\dd \lambda\notag \\
&= \frac{\delta}{\gamma_0-\delta}\exp\left[\delta \left(\EnD(\rho_0,u_0)+\frac{1}{2}\|\sigma\|^2_{L^\infty}T\right)\right],
\end{align}
and  \eqref{eq:entro5} simply follows by choosing $\delta=\gamma_0/2$. We concluded the proof of Proposition \ref{prop:allestimates}.

\subsection{Pathwise estimates}\label{sub:pathest}
By setting $\tu=u-w$, with $w=\sigma W$ we obtain from \eqref{eq:cNSE:1}-\eqref{eq:cNSE:2} that the pair $(\rho,\tu)$
satisfies the system
\begin{align}
& \rho_t + (\rho \tu)_x = f(\rho,w) \label{eq:cNSE:1til}\\
& (\rho \tu)_t + ( \rho \tu^2 + A^2\rho )_x  =  \tu_{xx}+\tu  f(\rho,w)+\rho g(\rho,\tu,w),
\label{eq:cNSE:2til}
\end{align}
where
\begin{align}\label{eq:formfg}
f(\rho,w)=-(\rho w)_x, \qquad g(\rho,\tu,w)=\frac{w_{xx}}{\rho} -(\tu w)_x-ww_x,
\end{align}
with initial datum $(\rho_0,\tu_0)=(\rho_0,u_0)$.
Notice that from \eqref{eq:cNSE:2til} we can also write the velocity equation alone as
\begin{align}
\tu_t +  \tu\tu_x + A^2\frac{\rho_x}{\rho}   =  \frac{\tu_{xx}}{\rho}+ g(\rho,\tu,w),
\label{eq:cNSE:3til}
\end{align}
which will also be useful in the sequel. Without explicit reference, we will use \eqref{eq:noise1} multiple times
to bound the various norms of $w$ appearing in the estimates.

\subsubsection{Pathwise energy estimates}
We begin by proving \eqref{eq:energypath}.
A lengthy calculation analogous to that in Section \ref{sub:ener} shows that
\begin{align}
\frac{\dd}{\dd t} \EnD(\rho,\tu) + \frac{1}{2} \|\tu_x\|_{L^2}^2 + \frac{A^2}{2} \| (\log \rho)_x\|_{L^2}^2= \mathfrak{F} (\rho,\tu, f,g),
    \label{eq:E:evo:gf}
\end{align}
where
\begin{align}
\mathfrak{F}(\rho,\tu, f,g)=& \int_0^1 \left( \frac{1}{2} f \tu^2   + A^2 f (\log \rho + 1) +
\frac{1}{2} \frac{\rho_x }{\rho^2}f \tu  + 
              \frac{1}{2}\frac{f_x \tu}{\rho} -  \frac{\rho_x \tu f }{\rho^2}
               + \frac{1}{2}\frac{\rho_x f_x }{\rho^3} - \frac{3}{4}\frac{(\rho_x)^2 f}{\rho^4}
        \right) \dd x\notag \\
    &+ \int_0^1 \left( \rho \tu g + \frac{1}{2} \frac{\rho_x }{\rho} g \right) \dd x.
\end{align}
Recalling \eqref{eq:formfg}, we may rewrite the above term as
\begin{align}
\mathfrak{F}(\rho,\tu, f,g) 
= -A^2\int_0^1(\rho w)_x \log \rho\,\dd x
-\frac{1}{2}\int_0^1 \left[\frac{\rho_x (\rho w)_{xx} }{\rho^3} - \frac{3}{2}\frac{(\rho_x)^2 (\rho w)_x}{\rho^4}
-  \frac{\rho_x }{\rho} \left(\frac{w_{xx}}{\rho}-ww_x\right)\right] \dd x \notag\\
-\frac{1}{2}\int_0^1 \left[  \left( \tu^2   -   \frac{\rho_x \tu}{\rho^2}\right)(\rho w)_x
+\frac{(\rho w)_{xx} \tu}{\rho} -2\rho \tu \left(\frac{w_{xx}}{\rho} -(\tu w)_x-ww_x\right) +  \frac{\rho_x }{\rho}  (\tu w)_x \right] \dd x.
\end{align}
We now integrate by parts multiple times. Clearly,
\begin{align}
-A^2\int_0^1(\rho w)_x \log \rho\,\dd x=A^2\int_0^1 w\rho_x\,\dd x=-A^2\int_0^1 w_x\rho\,\dd x.
\end{align}
Moreover, expanding all the derivatives and integrating by parts, the second term above reduces to
\begin{align}
&\int_0^1 \left[\frac{\rho_x (\rho w)_{xx} }{\rho^3} - \frac{3}{2}\frac{(\rho_x)^2 (\rho w)_x}{\rho^4}-  
\frac{\rho_x }{\rho} \left(\frac{w_{xx}}{\rho}-ww_x\right)\right] \dd x\notag\\
&\qquad\qquad =\int_0^1 \left[\frac12\frac{\left[(\rho_x)^2\right]_x w }{\rho^3}+\frac12\frac{(\rho_x)^2w_x }{\rho^3} - \frac{3}{2}\frac{(\rho_x)^3  w}{\rho^4}+\frac{\rho_x }{\rho} ww_x\right] \dd x\notag\\
&\qquad\qquad =\int_0^1 \frac{\rho_x }{\rho} ww_x\dd x.
\end{align}
Concerning the third term, similar arguments lead to
\begin{align}
&\int_0^1 \left[  \left( \tu^2   -   \frac{\rho_x \tu}{\rho^2}\right)(\rho w)_x
+\frac{(\rho w)_{xx} \tu}{\rho} -2\rho \tu \left(\frac{w_{xx}}{\rho} -(\tu w)_x-ww_x\right) +  \frac{\rho_x }{\rho}  (\tu w)_x \right] \dd x\notag\\
&\qquad\qquad =\int_0^1 \left[  \left( \tu^2  -\left(\frac{\tu}{\rho}\right)_x -   \frac{\rho_x \tu}{\rho^2}\right)(\rho w)_x
+2 \tu_x w_{x}+2\rho \tu (\tu w)_x+2\rho \tu ww_x +  \frac{\rho_x }{\rho}  (\tu w)_x \right] \dd x\notag\\
&\qquad\qquad =\int_0^1 \left[ \tu_x w_x+ 3\rho\tu^2w_x+\tu^2\rho_xw+2\rho\tu\tu_xw+2\rho \tu w w_x+\frac{\rho_x\tu}{\rho}w_x\right]\dd x\notag\\
&\qquad\qquad =\int_0^1 \left[ \tu_x w_x+ 2\rho\tu^2w_x+2\rho \tu w w_x+\frac{\rho_x\tu}{\rho}w_x\right]\dd x.
\end{align}
Therefore, collecting all of the above we find that
\begin{align}
\mathfrak{F}(\rho,\tu, f,g)=-A^2\int_0^1 w_x\rho\,\dd x-\frac12 \int_0^1 \frac{\rho_x }{\rho} ww_x\dd x
-\frac12\int_0^1 \left[ \tu_x w_x+ 2\rho\tu^2w_x+2\rho \tu w w_x+\frac{\rho_x\tu}{\rho}w_x\right]\dd x
\end{align}
and using \eqref{eq:mean1}, \eqref{eq:poinc1}, \eqref{eq:insfdgk} and standard inequalities we deduce that
\begin{align}
&|\mathfrak{F}(\rho,\tu, f,g)|\notag\\ 
&\qquad\leq A^2\| w_x\|_{L^\infty}+\| w_x\|_{L^\infty}^3+\frac12\|(\log\rho)_x\|_{L^2}\|w_x\|^2_{L^2}+\frac12\|\tu_x\|_{L^2} \|w_{x}\|_{L^2}
+c\|w_x\|_{L^\infty}\EnD(\rho,\tu)
\end{align}
implying that
\begin{align}
\frac{\dd}{\dd t} \EnD(\rho,\tu) + \frac{1}{4} \|\tu_x\|_{L^2}^2 + \frac{A^2}{4} \| (\log \rho)_x\|_{L^2}^2
&\leq c\|w_x\|_{L^\infty}\EnD(\rho,\tu)+\| w_x\|_{L^\infty}^3\notag\\
&\quad+A^2\|w_x\|_{L^\infty}+c\|w_{x}\|_{L^2}^2\left(1+\frac{1}{A^2}\|w_{x}\|_{L^2}^2\right).
    \label{eq:E:evo:gf2}
\end{align}
Hence, \eqref{eq:energypath} follows from the standard Gronwall lemma, together with the fact
that
\begin{align}
\mathcal{E}(\rho,u)
&=\int_0^1 \left(\frac{1}{2} \rho (\tu +w)^2 + A^2\rho \log\rho \right)\dd x+\frac{1}{2}  \int_0^1 \left(   \frac{\rho_x (\tu+w)}{\rho} +   \frac{1}{2} \frac{\rho_x^2}{\rho^3} \right) \dd x\\
&\leq c \mathcal{E}(\rho,\tu)+\int_0^1 \left(\rho w^2 + \frac{\rho_x w}{\rho}  \right)\dd x
\leq  c\mathcal{E}(\rho,\tu)+c\|w_x\|^2_{L^2}.
\end{align}

\subsubsection{$H^1$ estimates on the velocity}
To prove \eqref{eq:H1path}, we multiply \eqref{eq:cNSE:3til} by $u_{xx}$ and integrate by parts, to obtain the identity
\begin{align}
\frac{\dd}{\dd t} \|\tu_x\|_{L^2}^2 +2\int_0^1 \frac{|\tu_{xx}|^2}{\rho}\dd x= 
2 \int_0^1 \tu \tu_x\tu_{xx}\dd x+2 A^2\int_0^1\frac{\rho_x}{\rho}\tu_{xx} \dd x-2\int_0^1g(\rho,\tu,w)\tu_{xx} \dd x\label{eq:H1est1}.
\end{align}
Now,
\begin{align}
2\left|\int_0^1 \tu\tu_x\tu_{xx}\dd x\right|& \leq 2 \left(\int_0^1 \frac{|\tu_{xx}|^2}{\rho}\dd x\right)^{1/2}\|\rho^{1/2} \tu\|_{L^\infty}\|\tu_x\|_{L^2}\notag\\
& \leq 2 \left(\int_0^1 \frac{|\tu_{xx}|^2}{\rho}\dd x\right)^{1/2}\|\rho\|^{1/2}_{L^\infty}\|\tu_x\|^2_{L^2}\notag\\
& \leq  \frac13\int_0^1 \frac{|\tu_{xx}|^2}{\rho}\dd x+c\|\rho\|_{L^\infty}\|\tu_x\|^4_{L^2},
\end{align}
and
\begin{align}
2A^2\left|\int_0^1\tu_{xx} \frac{\rho_x}{\rho} \dd x\right|
&\leq \frac13\int_0^1 \frac{|\tu_{xx}|^2}{\rho}\dd x +cA^4 \int_0^1\frac{\rho_x^2}{\rho} \dd x\notag\\
&\leq \frac13\int_0^1 \frac{|\tu_{xx}|^2}{\rho}\dd x +cA^4\|\rho\|_{L^\infty} \|(\log\rho)_x\|_{L^2}^2.
\end{align}
Concerning the last term, we find
\begin{align}
2\left|\int_0^1g(\rho,\tu,w)\tu_{xx} \dd x\right|
&=2\left|\int_0^1\rho^{1/2}\left(\frac{w_{xx}}{\rho} -(\tu w)_x-ww_x\right)\frac{\tu_{xx}}{\rho^{1/2}} \dd x\right|\notag\\
&\leq c\left[\|\rho^{-1}\|_{L^\infty}\|w_{xx}\|^2_{L^2} +(1+\|\rho\|_{L^\infty}) \|w_x\|^2_{L^\infty}\|\tu_x\|^2_{L^2}
+\|w_x\|^4_{L^\infty}\right] \notag\\
&\quad+\frac13\int_0^1 \frac{|\tu_{xx}|^2}{\rho}\dd x.
\end{align}
Thus, \eqref{eq:H1est1} and the above estimates entail
\begin{align}
\frac{\dd}{\dd t} \|\tu_x\|_{L^2}^2 +\int_0^1 \frac{|\tu_{xx}|^2}{\rho}\dd x
&\leq c\left[\|\rho\|_{L^\infty}\|\tu_x\|^2_{L^2} +(1+\|\rho\|_{L^\infty}) \|w_x\|^2_{L^\infty}\right]\|\tu_x\|^2_{L^2}\notag\\
 &\quad +c\left[\|\rho^{-1}\|_{L^\infty}\|w_{xx}\|^2_{L^2} 
+\|w_x\|^4_{L^\infty}+A^4\|\rho\|_{L^\infty} \|(\log\rho)_x\|_{L^2}^2\right].
\end{align}
In view of \eqref{eq:energypath}, \eqref{eq:uplowpath}  and Lemma \ref{lem:enbounds}, 
we can apply the Gronwall lemma to the above inequality and deduce \eqref{eq:H1path}.

\subsection{Uniqueness of strong solutions}\label{sub:unique}
Let $(\rho,u)$ and $(r,v)$ be two solutions to \eqref{eq:cNSE:1}-\eqref{eq:cNSE:2}.
The relative entropy between $(\rho,u)$ and $(r,v)$ is defined as the functional
\begin{align}
\EnR(\rho, u | r, v) 
		&= \int_0^1 \left( \frac{1}{2}\rho (u - v)^2 + A^2\rho\log \frac{\rho}{r}\right)\dd x.
\end{align}
It is well-known that the above functional is not a distance, and it is not even symmetric. Nonetheless $\EnR(\rho, u | r, v)=0$
if and only if $(\rho,u)=(r,v)$. More precisely, the following well-known facts hold.
\begin{lemma}\label{lem:relentrbdds}
Let $(\rho,u)\in \XX$ and $(r,v)\in \XX$. Then
\begin{align}\label{eq:relentrlow}
\frac{\min\{1,A^2\}}{2}\min\left\{\|\rho^{-1}\|_{L^\infty} ,\|r^{-1}\|_{L^\infty} \right\}\left[  \| u-v\|_{L^2}^2+ \| \rho-r\|_{L^2}^2\right]
\leq \EnR(\rho, u | r, v)
\end{align}
and
\begin{align}\label{eq:relentrup}
\EnR(\rho, u | r, v) \leq \frac12 \|\rho\|_{L^\infty}  \| u-v\|_{L^2}^2 +\frac{A^2}{2}\max\left\{ \|\rho^{-1}\|_{L^\infty} , \|r^{-1}\|_{L^\infty} \right\}  \| \rho-r\|_{L^2}^2.
\end{align}
\end{lemma}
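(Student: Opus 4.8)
The plan is to estimate separately the two contributions to $\EnR(\rho,u\,|\,r,v)$, namely the kinetic term $\frac12\int_0^1\rho(u-v)^2\,\dd x$ and the relative-entropy term $A^2\int_0^1\rho\log(\rho/r)\,\dd x$. The kinetic term is immediate: from the pointwise bounds $1/\|\rho^{-1}\|_{L^\infty}\le\rho(x)\le\|\rho\|_{L^\infty}$ (which make sense since $\rho,r>0$ are continuous on $[0,1]$) one obtains at once
\begin{equation}
\frac{1}{2\|\rho^{-1}\|_{L^\infty}}\|u-v\|_{L^2}^2\le\frac12\int_0^1\rho(u-v)^2\,\dd x\le\frac12\|\rho\|_{L^\infty}\|u-v\|_{L^2}^2,
\end{equation}
which already supplies the $\|u-v\|_{L^2}^2$ contributions to both \eqref{eq:relentrlow} and \eqref{eq:relentrup} (with room, since $\min\{1,A^2\}\le1$).

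For the entropy term, the key observation is to use the normalisations $\int_0^1\rho\,\dd x=\int_0^1 r\,\dd x=1$ (both pairs lie in $\XX$) to rewrite
\begin{equation}
\int_0^1\rho\log\frac{\rho}{r}\,\dd x=\int_0^1\Big(\rho\log\frac{\rho}{r}-\rho+r\Big)\,\dd x=\int_0^1\Phi\big(\rho(x),r(x)\big)\,\dd x,
\end{equation}
where $\Phi(a,b)=a\log(a/b)-a+b$ is the Bregman divergence of the convex map $s\mapsto s\log s$; in particular $\Phi\ge0$ pointwise. I would then establish the elementary one-variable bound
\begin{equation}
\frac{(a-b)^2}{2\max\{a,b\}}\le\Phi(a,b)\le\frac{(a-b)^2}{2\min\{a,b\}},\qquad a,b>0,
\end{equation}
for instance by writing $\Phi(a,b)=\int_b^a\frac{a-t}{t}\,\dd t$ and noting that the integration variable ranges only between $\min\{a,b\}$ and $\max\{a,b\}$ (equivalently, by Taylor's formula, $\Phi(a,b)=(a-b)^2/(2\xi)$ for some $\xi$ between $a$ and $b$). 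Applying this pointwise with $a=\rho(x)$, $b=r(x)$, bounding $\min\{\rho,r\}\ge 1/\max\{\|\rho^{-1}\|_{L^\infty},\|r^{-1}\|_{L^\infty}\}$ for the upper estimate and $\max\{\rho,r\}\le\max\{\|\rho\|_{L^\infty},\|r\|_{L^\infty}\}$ for the lower one, and integrating over $(0,1)$, gives the two-sided control of $A^2\int_0^1\rho\log(\rho/r)\,\dd x$ by $\|\rho-r\|_{L^2}^2$. Adding this to the kinetic estimate yields \eqref{eq:relentrlow}--\eqref{eq:relentrup}; in particular $\EnR(\rho,u\,|\,r,v)=0$ forces $\rho=r$ and $u=v$.

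There is essentially no hard step here: the only genuine ingredient is the scalar convexity inequality for $\Phi$, while everything else is bookkeeping with the mass constraint and the pointwise density bounds. The one subtlety worth flagging is the direction of the density bounds in the lower estimate: since $\Phi(a,b)/(a-b)^2\to0$ as $a\to\infty$ with $b$ fixed, the lower bound on the entropy term must carry an \emph{upper} bound on the densities (here $\max\{\|\rho\|_{L^\infty},\|r\|_{L^\infty}\}$) and cannot be expressed through $\|\rho^{-1}\|_{L^\infty},\|r^{-1}\|_{L^\infty}$ alone; these sup-norms are finite because $\rho,r\in H^1\hookrightarrow C([0,1])$ with $\rho,r>0$, and are quantified in terms of $\EnD$ via Lemma \ref{lem:enbounds}.
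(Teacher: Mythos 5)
Your argument is correct and is essentially the one in the paper: both use the mass constraints to rewrite the entropy part as the integral of the Bregman divergence $\Phi(a,b)=a\log(a/b)-a+b$ of the convex function $s\mapsto s\log s$, and then apply the pointwise Taylor estimate $\frac{(a-b)^2}{2\max\{a,b\}}\le\Phi(a,b)\le\frac{(a-b)^2}{2\min\{a,b\}}$; your upper estimate reproduces \eqref{eq:relentrup} verbatim. The subtlety you flag about the lower bound is a genuine catch and not a cosmetic deviation: since $\int_0^1\rho\,\dd x=1$ forces $\|\rho^{-1}\|_{L^\infty}\ge 1$, the coefficient printed in \eqref{eq:relentrlow} is at least $\tfrac12\min\{1,A^2\}$, and already the kinetic term alone contradicts this (take $A\ge 1$, $r=\rho$ nonconstant, and $u-v$ concentrated near the minimum of $\rho$: one would need $1/\inf\rho\le\inf\rho$). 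What the Taylor argument actually delivers --- and what you prove --- is a lower bound with coefficient $\tfrac12\min\{1,A^2\}\,\min\{\|\rho^{-1}\|_{L^\infty}^{-1},\|\rho\|_{L^\infty}^{-1},\|r\|_{L^\infty}^{-1}\}$, i.e.\ the reciprocal of $\|\rho^{-1}\|_{L^\infty}$ from the kinetic term and the reciprocal of $\max\{\|\rho\|_{L^\infty},\|r\|_{L^\infty}\}$ from the entropy term; the printed statement should be read with these reciprocals. The correction is harmless downstream, since in Theorems \ref{eq:unique} and \ref{eq:contdiprel} the constant only needs to be controlled pathwise by quantities that Proposition \ref{prop:allestimates2} and Lemma \ref{lem:enbounds} bound, which the corrected constant still is.
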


\begin{proof}
The proof is based on the fact that the function $\xi \mapsto F(\xi):=\xi \log\xi -\xi+1$ is convex.
 In particular,
\begin{align}
F'(\xi)= \log \xi, \qquad F''(\xi)= \frac{1}{\xi}, \qquad \forall \xi >0.
\end{align}
Moreover, due
to \eqref{eq:mean1}, the relative entropy can be rewritten as
\begin{align}
\EnR(\rho, u | r, v) 
		&= \int_0^1 \left( \frac{1}{2}\rho (u - v)^2 + A^2\left[F(\rho)-F(r)-(\rho-r)F'(r)\right]\right)\dd x.
\end{align}
Therefore, using Taylor's theorem, on the one hand we have the lower bound
\begin{align}
\frac{1}{2} \min\left\{\frac{1}{\xi_1},\frac{1}{\xi_2}\right\} (\xi_1-\xi_2)^2
\leq F(\xi_1)-F(\xi_2)-(\xi_1-\xi_2)F'(\xi_2)
\end{align}
while on the other hand the upper bound reads
\begin{align}
F(\xi_1)-F(\xi_2)-(\xi_1-\xi_2)F'(\xi_2)
\leq \frac{1}{2}\max\left\{\frac{1}{\xi_1},\frac{1}{\xi_2}\right\} (\xi_1-\xi_2)^2.
\end{align}
Using the above bounds in the expression for $\EnR$ yields precisely \eqref{eq:relentrlow}-\eqref{eq:relentrup}, and the proof is over.
\end{proof}
We then have the following uniqueness result.

\begin{theorem}\label{eq:unique}
Let $(\rho,u)$ and $(r,v)$ be two solutions of \eqref{eq:cNSE:1}-\eqref{eq:boundary1}, corresponding to the same initial 
condition $(\rho_0,u_0)\in \XX$.  
Then
\begin{align}
\Prb\big[(\rho(t),u(t)) =(r(t),v(t)), \ \forall t\geq 0 \big]=1.
\end{align}

\end{theorem}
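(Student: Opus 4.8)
The plan is to prove pathwise uniqueness via a relative entropy (weak-strong type) estimate: since one of the two solutions can be treated as ``more regular'' (both are in fact strong in the sense of Theorem \ref{T2.1}), I will derive a differential inequality for $t\mapsto\EnR(\rho,u\,|\,r,v)(t)$ and conclude by a pathwise Gronwall argument. First I would compute, using the It\={o} calculus applied to the relevant quantities (mirroring the computations in Sections \ref{sec:eneraefd} leading to \eqref{eq:entr:Bal}), the evolution of each piece of $\EnR$. The key observation is that, because both solutions are driven by the \emph{same} noise $\sigma\,\dd W$ and the noise enters \eqref{eq:cNSE:2} multiplicatively through $\rho\,\sigma\,\dd W$, the martingale terms in the balance for $\EnR$ will combine into a form controlled by $\EnR$ itself (roughly, the stochastic integrand is $\int_0^1 (\rho-r)(\cdots)\sigma_\ell$, which is quadratically small in the difference), so no genuinely stochastic difficulty survives; the estimate is essentially deterministic conditional on the path of $W$.

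The main steps, in order: (i) write $\dd\big(\tfrac12\int_0^1\rho(u-v)^2\big)$ using the momentum equations for $\rho u$ and $rv$ and the continuity equations, being careful that $v$ is not an admissible test function directly but, since $v\in H^1_0$ and $(r,v)$ enjoys the regularity of Theorem \ref{T2.1}, all integrations by parts are justified; (ii) write $\tfrac{\dd}{\dd t}\big(A^2\int_0^1\rho\log(\rho/r)\big)$ using only the two continuity equations (this part is purely deterministic, as in the energy computation where the $\rho\log\rho$ term used only \eqref{eq:cNSE:1}); (iii) add them and identify the dissipative term $\|(u-v)_x\|_{L^2}^2$ with the right sign, the pressure cross-terms (which cancel or telescope thanks to the linear pressure law $p(\rho)=A^2\rho$, exactly the feature exploited elsewhere in the paper), and a remainder; (iv) bound the remainder by $c(t)\,\EnR(\rho,u\,|\,r,v)$, where $c(t)$ depends on $\|\rho\|_{L^\infty}$, $\|r^{-1}\|_{L^\infty}$, $\|u_x\|_{L^2}$, $\|v_x\|_{L^2}$, $\|(\log\rho)_x\|_{L^2}$, $\|(\log r)_x\|_{L^2}$ and the path of $\sigma W$, all of which are finite for a.e.\ $\omega$ and locally integrable in $t$ by Theorem \ref{T2.1} and Proposition \ref{prop:allestimates2}; (v) since $\EnR(\rho_0,u_0\,|\,\rho_0,u_0)=0$, conclude $\EnR(\rho,u\,|\,r,v)(t)=0$ for all $t$ by Gronwall, and then invoke the lower bound \eqref{eq:relentrlow} in Lemma \ref{lem:relentrbdds} together with continuity of trajectories to upgrade to $(\rho(t),u(t))=(r(t),v(t))$ for all $t\ge 0$ simultaneously, a.s.

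The step I expect to be the main obstacle is (iii)--(iv): organizing the cross-terms in the relative-entropy balance so that the ``bad'' terms are genuinely of the form (controllable coefficient)$\times\EnR$, rather than something involving $\|(\rho-r)_x\|_{L^2}$ or $\|(u-v)_x\|_{L^2}$ with a large or non-integrable coefficient, since the density equation has no smoothing and $\EnR$ controls only $L^2$-norms of the differences (cf.\ \eqref{eq:relentrlow}). The linear pressure law is essential here: it is what makes the pressure contribution to $\dd\EnR$ collapse to terms absorbed either by the velocity dissipation $\|(u-v)_x\|_{L^2}^2$ (via the Poincar\'e-type inequality \eqref{eq:poinc1} applied to the difference) or by $\EnR$ directly, using that $A^2\rho\log(\rho/r) = A^2[F(\rho)-F(r)-(\rho-r)F'(r)]$ has second-order behavior in $\rho-r$. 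A secondary technical point is justifying the It\={o} expansions rigorously at the available regularity level; this is handled exactly as in the (deterministic-flavored) computations of Section \ref{sec:eneraefd}, since the noise is spatially smooth by \eqref{eq:noise} and enters only through the $\rho\,\sigma\,\dd W$ term whose contribution to $\dd\EnR$ is a martingale with integrand vanishing to second order in the difference.
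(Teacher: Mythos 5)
Your proposal is essentially the paper's proof: a pathwise relative-entropy/Gronwall argument for $\EnR(\rho,u\,|\,r,v)$, with the Gronwall coefficient (which turns out to be a multiple of $\int_0^1 |v_{xx}|^2/r\,\dd x$) made locally integrable a.s.\ by Proposition \ref{prop:allestimates2}, and the conclusion obtained from the lower bound \eqref{eq:relentrlow}. The one point to sharpen is your treatment of the noise: once \eqref{eq:cNSE:2} is rewritten as an equation for $u$ itself, the forcing becomes the additive $\sigma\,\dd W$, identical for both solutions, so the equation for $u-v$ --- and hence the balance for $\EnR$ --- contains \emph{no} martingale term and no It\^o correction at all, rather than a martingale that is ``quadratically small in the difference''; this exact cancellation is what legitimizes the purely pathwise Gronwall step, since a surviving stochastic integral with integrand linear in $\rho-r$ could not be absorbed into a deterministic Gronwall inequality.
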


\begin{proof}
We compute the time derivative of $\EnR(\rho, u | r, v) $. Firstly, notice that \eqref{eq:cNSE:2} can be rewritten as
\begin{align}
\dd u + \left( u u_x + A^2\frac{\rho_x}{\rho} \right) \dd t = \frac{u_{xx}}{\rho} \dd t + \sigma \dd W,
\end{align}
so that the difference of the velocities satisfies
\begin{align}\label{eq:cNSE:2diff}
(u-v)_t +  u u_x -vv_x+ A^2\left[\log\left(\frac{\rho}{r}\right)\right]_x = \frac{u_{xx}}{\rho}-\frac{v_{xx}}{r} ,
\end{align}
Therefore, making use of  \eqref{eq:cNSE:1} and \eqref{eq:cNSE:2diff}, we have that 
\begin{align}
\frac{\dd}{\dd t} \frac12\int_0^1 \rho (u - v)^2\dd x &= \frac12\int_0^1 \rho_t (u - v)^2\dd x +\int_0^1 \rho (u - v)(u - v)_t\dd x\notag \\
&=-\frac12\int_0^1 (\rho u)_x(u - v)^2\dd x+\int_0^1 \rho (u - v)\left(\frac{u_{xx}}{\rho}-\frac{v_{xx}}{r}\right)\dd x \notag\\
&\quad-\int_0^1 \rho (u - v)(u u_x -vv_x)\dd x-A^2\int_0^1 \rho (u - v) \left[\log\left(\frac{\rho}{r}\right)\right]_x\dd x\notag\\
&=\int_0^1 \rho u(u - v)(u-v)_x\dd x- \| (u - v)_x\|_{L^2}^2
+\int_0^1 \frac{\rho-r}{r} (v -u)  v_{xx} \dd x\notag\\
&\quad-\int_0^1 \rho (u - v)(u u_x -vv_x)\dd x-A^2\int_0^1 \rho (u - v) \left[\log\left(\frac{\rho}{r}\right)\right]_x\dd x\notag\\
&=-\| (u - v)_x\|_{L^2}^2+\int_0^1 \frac{\rho-r}{r} (v -u)  v_{xx} \dd x- \int_0^1 v_x  \rho   (u - v)^2 \dd x\notag\\
&\quad-A^2\int_0^1 \rho (u - v) \left[\log\left(\frac{\rho}{r}\right)\right]_x\dd x.\label{eq:cont1}
\end{align}
On the other hand, due to \eqref{eq:cNSE:1} once more, we compute
\begin{align}
\frac{\dd}{\dd t} \int_0^1 \rho\log\left(\frac{\rho}{r}\right)\dd x&=\int_0^1 \rho_t\log\left(\frac{\rho}{r}\right)\dd x
+\int_0^1 \left(\rho_t- \frac{\rho}{r}r_t\right)\dd x\notag\\
&=-\int_0^1 (\rho u)_x\log\left(\frac{\rho}{r}\right)\dd x+\int_0^1\frac{\rho}{r}(r v)_x\dd x\notag\\
&=\int_0^1 \rho u\left[\log\left(\frac{\rho}{r}\right)\right]_x\dd x+\int_0^1\rho v_x\dd x+\int_0^1\frac{\rho}{r}r_x v\dd x\notag\\
&=\int_0^1 \rho (u - v) \left[\log\left(\frac{\rho}{r}\right)\right]_x.\label{eq:cont2}
\end{align}
Therefore, combining \eqref{eq:cont1} and \eqref{eq:cont2}, we arrive at 
\begin{align}\label{eq:relentrequal}
\frac{\dd}{\dd t}\EnR(\rho, u | r, v) +\| (u - v)_x\|_{L^2}^2&=\int_0^1 \frac{\rho-r}{r} (v -u)  v_{xx} \dd x- \int_0^1 v_x  \rho   (u - v)^2 \dd x.\end{align}
We now estimate each term in the right-hand side above. We preliminary notice that for any smooth function 
$g$ such that $g(x_0)=0$ for some $x_0\in [0,1]$, there holds the inequality
\begin{align}\label{eq:poinc2}
\| g\|_{L^\infty} \leq \left( \int_0^1 \frac{|g_x|^2}{\rho}\dd x\right)^{1/2},
\end{align}
where $\rho$ can be replaced by any positive function with mass at most 1 (and, in particular, by $r$). Indeed,
\begin{align}
|g(x)|=\left|\int_{x_0}^x g_y\dd y\right|=\left|\int_{x_0}^x \rho^{1/2}\frac{g_y}{\rho^{1/2}}\dd y\right|\leq \left(\int_{x_0}^x \rho\dd y\right)^{1/2}
\left(\int_{x_0}^x\frac{|g_y|^2}{\rho}\dd y\right)^{1/2}\leq \left( \int_0^1 \frac{|g_x|^2}{\rho}\dd x\right)^{1/2}.
\end{align}
Concerning the first term in the right-hand side of \eqref{eq:relentrequal}, using standard inequalities and \eqref{eq:relentrlow} 
we obtain
\begin{align}
\int_0^1 \frac{\rho-r}{r} (v -u)  v_{xx} \dd x
&\leq \|r^{-1}\|^{1/2}_{L^\infty}\|\rho-r\|_{L^2}\|v-u\|_{L^\infty}\left(\int_0^1\frac{|v_{xx}|^2}{r}\dd x\right)^{1/2} \notag\\
&\leq\|r^{-1}\|_{L^\infty}\left(\int_0^1\frac{|v_{xx}|^2}{r}\dd x\right)\|\rho-r\|_{L^2}^2 +\frac12\|(u-v)_x\|_{L^2}^2\notag\\
&\leq \frac{2}{\min\{1,A^2\}}\frac{\|r^{-1}\|_{L^\infty} }{\min\left\{\|\rho^{-1}\|_{L^\infty} ,\|r^{-1}\|_{L^\infty} \right\}}\left(\int_0^1\frac{|v_{xx}|^2}{r}\dd x\right)\EnR(\rho, u | r, v)\notag \\
&\quad+\frac12\|(u-v)_x\|_{L^2}^2\notag\\ 
&\leq \frac{2}{\min\{1,A^2\}}\max\left\{1, \frac{\|r^{-1}\|_{L^\infty} }{\|\rho^{-1}\|_{L^\infty} }\right\}\left(\int_0^1\frac{|v_{xx}|^2}{r}\dd x\right)\EnR(\rho, u | r, v)\notag \\
&\quad+\frac12\|(u-v)_x\|_{L^2}^2,
\end{align}
where the last line follows from \eqref{eq:relentrlow}. For the second term, we use \eqref{eq:poinc1} and \eqref{eq:poinc2} to deduce that
\begin{align}
\int_0^1 v_x  \rho   (u - v)^2 \dd x
&\leq \|v_x\|_{L^\infty}\left(\int_0^1  \rho   (u - v)^2 \dd x\right)^{1/2}\|(u-v)_x\|_{L^2}\notag\\
&\leq \left(\int_0^1\frac{|v_{xx}|^2}{r}\dd x\right)\EnR(\rho, u | r, v)+\frac12\|(u-v)_x\|_{L^2}^2.
\end{align}
Hence, from \eqref{eq:relentrequal} it follows that
\begin{align}\label{eq:relentrequal2}
\frac{\dd}{\dd t}\EnR(\rho, u | r, v) \leq \left(1+\frac{2}{\min\{1,A^2\}}\max\left\{1, \frac{\|r^{-1}\|_{L^\infty} }{\|\rho^{-1}\|_{L^\infty} }\right\}\right)\left(\int_0^1\frac{|v_{xx}|^2}{r}\dd x\right)\EnR(\rho, u | r, v).
\end{align}
Thanks to Proposition \ref{prop:allestimates2}, the term multiplying $\EnR(\rho, u | r, v)$ in  the right-hand side above is locally integrable
in time. 
We now apply the standard Gronwall lemma to \eqref{eq:relentrequal2} and use the
fact that 
$$
\EnR(\rho, u | r, v)|_{t=0}=0
$$ 
by assumption to conclude that the two solutions are indistinguishable.

\end{proof}

\subsection{Continuous dependence of solutions}\label{sub:conti}

Regarding the continuous dependence of solutions, the picture is more complicated, and can be proven in the following situation.
\begin{theorem}\label{eq:contdiprel}
Let $\{(\rho^n,u^n)\}_{n\in \NN}$ be a sequence of solutions to \eqref{eq:cNSE:1}-\eqref{eq:boundary1} with initial data
$\{(\rho_0^n,u_0^n)\}_{n\in \NN}$ such that
\begin{align}\label{eq:notsure}
\sup_{n\in \NN}\EnD(\rho_0^n,u_0^n)<\infty,
\end{align}
and let  $(\rho,u)$ be a a solution to \eqref{eq:cNSE:1}-\eqref{eq:boundary1} with initial datum
$(\rho_0,u_0)\in \XX$. If 
\begin{align}\label{eq:initialconv}
(\rho_0^n,u_0^n)\to (\rho_0,u_0) \quad \text{in } L^2\times L^2,
\end{align}
then
\begin{align}
(\rho^n,u^n)\to (\rho,u) \quad \text{a.s. in } L^2\times L^2,
\end{align}
for all $t\geq 0$.
\end{theorem}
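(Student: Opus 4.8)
The plan is to run a relative-entropy stability argument, comparing each $(\rho^n,u^n)$ to the limiting solution $(\rho,u)$, exactly as in the proof of Theorem \ref{eq:unique}, but keeping careful track of the fact that the two initial data differ. Set $\EnR_n(t) := \EnR(\rho^n,u^n\,|\,\rho,u)(t)$. Repeating the computation leading to \eqref{eq:relentrequal}, but now with distinct initial data, gives the differential inequality
\begin{align}
\frac{\dd}{\dd t}\EnR_n \leq \left(1+\frac{2}{\min\{1,A^2\}}\max\left\{1, \frac{\|\rho^{-1}\|_{L^\infty} }{\|(\rho^n)^{-1}\|_{L^\infty} }\right\}\right)\left(\int_0^1\frac{|u_{xx}|^2}{\rho}\dd x\right)\EnR_n,
\end{align}
valid pathwise on $[0,T]$; note the integrating factor involves only the fixed solution $(\rho,u)$ and the uniform-in-$n$ lower bound on $\rho^n$. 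By \eqref{eq:notsure}, Lemma \ref{lem:enbounds}, and Proposition \ref{prop:allestimates2} applied to the sequence, the quantities $\|(\rho^n)^{-1}\|_{L^\infty}$ and $\sup_{t\in[0,T]}\EnD(\rho^n,u^n)(t)$ are bounded by a deterministic constant depending only on $\sigma W$, $A$, $T$, and $\sup_n\EnD(\rho_0^n,u_0^n)$ — hence \emph{uniformly in $n$} — and the coefficient $\int_0^1 |u_{xx}|^2/\rho\,\dd x$ is almost surely in $L^1_{loc}$ in time by \eqref{eq:H1path} applied to $(\rho,u)$. Thus Gr\"onwall yields, almost surely,
\begin{align}
\EnR_n(t) \leq \EnR_n(0)\exp\left(C(\sigma W, A, t)\int_0^t\int_0^1\frac{|u_{xx}|^2}{\rho}\dd x\,\dd s\right), \qquad \forall t\in[0,T].
\end{align}

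Next I would pass to the initial data. By the upper bound \eqref{eq:relentrup} in Lemma \ref{lem:relentrbdds},
\begin{align}
\EnR_n(0) \leq \frac12\|\rho_0^n\|_{L^\infty}\|u_0^n-u_0\|_{L^2}^2 + \frac{A^2}{2}\max\left\{\|(\rho_0^n)^{-1}\|_{L^\infty},\|\rho_0^{-1}\|_{L^\infty}\right\}\|\rho_0^n-\rho_0\|_{L^2}^2.
\end{align}
The $L^\infty$ and $L^\infty$-of-reciprocal norms of $\rho_0^n$ are controlled via \eqref{eq:unifbound1} by $\exp([8\EnD(\rho_0^n,u_0^n)]^{1/2})$, hence uniformly bounded by \eqref{eq:notsure}; and $\|u_0^n-u_0\|_{L^2}^2+\|\rho_0^n-\rho_0\|_{L^2}^2\to 0$ by \eqref{eq:initialconv}. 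Therefore $\EnR_n(0)\to 0$, and combining with the Gr\"onwall bound, $\EnR_n(t)\to 0$ almost surely for every fixed $t$ (indeed uniformly on $[0,T]$, since the exponential factor is bounded by the constant obtained from \eqref{eq:H1path}). Finally, the lower bound \eqref{eq:relentrlow} converts this into convergence of the solutions themselves: since $\|(\rho^n)^{-1}\|_{L^\infty}$ is bounded below uniformly in $n$ (equivalently, $\min\{\|(\rho^n)^{-1}\|_{L^\infty},\|\rho^{-1}\|_{L^\infty}\}$ stays bounded), \eqref{eq:relentrlow} gives
\begin{align}
\|u^n(t)-u(t)\|_{L^2}^2 + \|\rho^n(t)-\rho(t)\|_{L^2}^2 \leq \frac{2}{\min\{1,A^2\}\,c}\,\EnR_n(t) \to 0,
\end{align}
for a uniform constant $c>0$, which is the desired conclusion.

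A subtle point worth flagging — and the place where the argument genuinely uses more than a verbatim copy of Theorem \ref{eq:unique} — is the justification that the relative-entropy identity \eqref{eq:relentrequal} and the subsequent estimates hold with $\EnR_n(0)\neq 0$, i.e.\ that all integrations by parts and the It\^o/time-derivative manipulations are legitimate for two solutions with \emph{different} initial data. This is not really an obstacle, since the computation in Theorem \ref{eq:unique} never used $\EnR(0)=0$ until the very last line; one only needs the regularity from Theorem \ref{T2.1} and Proposition \ref{prop:allestimates2}, which holds for every solution. The second point requiring a little care is the uniformity of the Gr\"onwall constant in $n$: one must note that it depends on the sequence only through $\sup_n \EnD(\rho_0^n,u_0^n)$ (via the uniform density bounds) and through norms of the \emph{fixed} solution $(\rho,u)$, never through $n$-dependent quantities that could blow up. With these two observations in place the proof is essentially a bookkeeping exercise combining \eqref{eq:relentrequal}, Lemma \ref{lem:relentrbdds}, and Proposition \ref{prop:allestimates2}.
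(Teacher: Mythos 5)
Your proof is correct and follows essentially the same route as the paper's: you set up the relative entropy $\EnR(\rho^n,u^n\,|\,\rho,u)$ in the same (asymmetric) order so that the Gr\"onwall coefficient involves only $u_{xx}$ of the fixed limit solution together with the uniform-in-$n$ density bounds from Proposition \ref{prop:allestimates2}, send $\EnR(\rho_0^n,u_0^n\,|\,\rho_0,u_0)\to 0$ via Lemma \ref{lem:relentrbdds} and \eqref{eq:notsure}--\eqref{eq:initialconv}, and convert back with \eqref{eq:relentrlow}. The two subtleties you flag --- the ordering of the arguments of $\EnR$ and the fact that the Gr\"onwall constant never sees the uncontrolled quantities $\|(u_0^n)_x\|_{L^2}$ --- are precisely the points the paper itself emphasizes.
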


\begin{proof}
We use the approach in Theorem \ref{eq:unique}. Arguing in the same way as we did to derive \eqref{eq:relentrequal2}, we arrive at
\begin{align}\label{eq:relentrequaln}
\frac{\dd}{\dd t}\EnR(\rho^n, u^n | \rho, u) \leq \left(1+\frac{2}{\min\{1,A^2\}}\max\left\{1, \frac{\|\rho^{-1}\|_{L^\infty} }{\|(\rho^n)^{-1}\|_{L^\infty} }\right\}\right)\left(\int_0^1\frac{|u_{xx}|^2}{\rho}\dd x\right)\EnR(\rho^n, u^n | \rho, u).\notag\\
\end{align}
Due to the lack of symmetry of the relative entropy, it is important that we consider the above quantity, and not the other possible
$\EnR(\rho, u | \rho^n, u^n)$.
Let 
$$
R=\max\left\{\sup_{n\in \NN}\EnD(\rho_0^n,u_0^n), \EnD(\rho_0,u_0)\right\}<\infty.
$$ 
In view of Proposition \ref{prop:allestimates2}, we have  $n$-independent almost sure
bounds of the form
\begin{align}
\sup_{t\in[0,T]}\int_0^t\left(\int_0^1\frac{|u_{xx}|^2}{\rho}\dd x\right)\dd s\leq C\left(T,\sigma W, R, \|(u_0)_x\|_{L^2}\right),
\end{align}
and
\begin{align}
\sup_{t\in[0,T]}\frac{\|\rho^{-1}\|^2_{L^\infty} }{\|\rho_n^{-1}\|^2_{L^\infty}}\leq  C\left(T,\sigma W,R\right).
\end{align}
The point is that the latter bound does not depend on $ \|(u^n_0)_x\|_{L^2}$,
which is not, in general, under control uniformly in $n\in \NN$. Thanks to Lemma \ref{lem:relentrbdds}, the fact that 
$\EnD(\rho^n_0, u_0^n)\leq R$ and \eqref{eq:initialconv}, we have that 
$$
\lim_{n\to\infty} \EnR(\rho_0^n, u_0^n | \rho_0, u_0)=0.
$$
Thus, the standard Gronwall lemma applied to \eqref{eq:relentrequaln} implies
$$
\lim_{n\to \infty}\EnR(\rho^n(t), u^n(t) | \rho(t), u(t))=0, \qquad \text{a.s.}
$$
for all $t\geq 0$. Hence, a further application of Lemma \ref{lem:relentrbdds} leads to the conclusion of the proof. 
\end{proof}

\section{Invariant measures}\label{sec:invar}
In this section, we prove of the main results of this article, namely the existence of invariant measures 
to \eqref{eq:cNSE:1}-\eqref{eq:cNSE:2}. As mentioned earlier, we will work in the phase space
\begin{align}
\XX=\left\{(\rho,u)\in H^1(0,1)\times H^1_0(0,1): \int_0^1 \rho(x) \dd x = 1, \ \rho>0\right\}.
\end{align}
However, in view of Theorem \ref{eq:contdiprel}, the correct topology does not seem to be the natural one induced
by the product norm in $\XX$, since Lemma \ref{lem:relentrbdds} suggests that continuous dependence for our system holds
in a weaker sense. We make this precise here below.

%

\subsection{The Markovian framework}\label{sub:Markov}
We denote by $\rho(t;\rho_0), u(t;u_0)$ the unique solution to \eqref{eq:cNSE:1}-\eqref{eq:cNSE:2}, with (deterministic)
initial data $(\rho_0, u_0)\in \XX$. For a set $B\in \mathcal{B}(\XX)$, we define the transition functions
\begin{align}
\Ma_t(\rho_0, u_0, B)=\Prb ((\rho(t;\rho_0), u(t;u_0)) \in B),
\end{align}
for any $t\geq 0$.
For $t\geq 0$, define the Markov semigroup
\begin{equation}
\Ma_t \phi(\rho_0, u_0)= \E\phi(\rho(t;\rho_0), u(t;u_0)), \qquad \phi\in M_b(\XXL).
\end{equation}
The usual semigroup properties
$$
\Ma_0=\text{identity on }M_b(\XXL)
$$
and
$$
\Ma_{t+s}=\Ma_t\circ\Ma_s, \qquad \forall t,s\geq 0,
$$
follow from the existence and uniqueness results for our system. Now, it is clear from the estimates in Propositions \ref{prop:allestimates}
and \ref{prop:allestimates2} that 
$$
\Ma_t:M_b(\XXL)\to M_b(\XXL),
$$
namely, $\Ma_t$ is well defined on measurable bounded functions. The Feller property, namely that $\Ma_t$ maps $C_b(\XXL)$
into $C_b(\XXL)$ is more delicate, and we are not able to prove it at the moment.  In particular, it does not follow from Theorem \ref{eq:contdiprel}, due to the additional requirement \eqref{eq:notsure}. However, Theorem \ref{eq:contdiprel} suggests 
the following definition.

\begin{definition}
Let $\phi \in M_b(\XXL)$. We say that $\phi:\XX\to \RR$ belongs to the class $\GG$ if
$$
\lim_{n\to\infty}\phi(\rho^n,u^n)=\phi(\rho,u),
$$
whenever 
\begin{align}\label{eq:convseq}
\{(\rho^n,u^n)\}_{n\in \NN}\subset \XXL, \quad \lim_{n\to \infty}(\rho^n,u^n)=(\rho,u) \quad \text{in } \XXL,  \quad \text{and}\quad
\sup_{n\in \NN}\EnD(\rho^n,u^n)<\infty.
\end{align}
\end{definition}
From the above definition and Lemma \ref{lem:enbounds}, it is clear that 
$$
C_b(\XXL)\subset \GG\subset C_b(\XXH).
$$
It turns out that $\GG$ is also invariant under $\Ma_t$. 

\begin{lemma}\label{lem:GtoG}
The class $\GG$ is invariant under the Markov semigroup $\Ma_t$, namely
$$
\Ma_t:\GG\to \GG.
$$
\end{lemma}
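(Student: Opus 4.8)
The plan is to show that if $\phi\in\GG$, then $\Ma_t\phi\in\GG$, i.e. for any sequence $(\rho_0^n,u_0^n)\to(\rho_0,u_0)$ in $\XXL$ with $\sup_n\EnD(\rho_0^n,u_0^n)<\infty$, we have $\Ma_t\phi(\rho_0^n,u_0^n)\to\Ma_t\phi(\rho_0,u_0)$. Unwinding the definitions, this means
\begin{align}
\E\,\phi\big(\rho(t;\rho_0^n),u(t;u_0^n)\big)\to\E\,\phi\big(\rho(t;\rho_0),u(t;u_0)\big).
\end{align}
First I would invoke Theorem \ref{eq:contdiprel}: since the initial data satisfy exactly hypotheses \eqref{eq:notsure} and \eqref{eq:initialconv}, we get $(\rho^n(t),u^n(t))\to(\rho(t),u(t))$ almost surely in $L^2\times L^2$ for each fixed $t\geq 0$. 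The idea is then to pass this almost sure convergence through $\phi$ using the defining property of the class $\GG$, and then use boundedness of $\phi$ together with dominated convergence to pass to the limit under the expectation.

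The key remaining point is to verify that the random variables $(\rho^n(t),u^n(t))$ almost surely satisfy the uniform energy bound required in \eqref{eq:convseq}, namely $\sup_n\EnD(\rho^n(t),u^n(t))<\infty$ almost surely. This is where Proposition \ref{prop:allestimates2} enters: the pathwise bound \eqref{eq:energypath} gives, for each $n$,
\begin{align}
\EnD(\rho^n(t),u^n(t))\leq C_1\big(\sigma W,A,\EnD(\rho_0^n,u_0^n)\big)\qquad\text{a.s.}
\end{align}
Since the constant $C_1$ can be explicitly computed and is monotone in its last argument, and since $\sup_n\EnD(\rho_0^n,u_0^n)=:R<\infty$ by assumption, we may replace the right-hand side by $C_1(\sigma W,A,R)$, a single random variable that is almost surely finite and independent of $n$. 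Hence, on the almost sure event where this bound holds for all $n$ simultaneously and where the $L^2\times L^2$ convergence from Theorem \ref{eq:contdiprel} holds, the sequence $(\rho^n(t),u^n(t))$ meets all three conditions in \eqref{eq:convseq}, so by definition of $\GG$ we get $\phi(\rho^n(t),u^n(t))\to\phi(\rho(t),u(t))$ almost surely.

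Finally, since $\phi\in M_b(\XXL)$ is bounded, the constant $\|\phi\|_\infty$ serves as an integrable dominating function, and the dominated convergence theorem yields $\E\,\phi(\rho^n(t),u^n(t))\to\E\,\phi(\rho(t),u(t))$, i.e. $\Ma_t\phi(\rho_0^n,u_0^n)\to\Ma_t\phi(\rho_0,u_0)$. One should also check that $\Ma_t\phi$ is itself a bounded measurable function on $\XXL$, which is immediate since $\|\Ma_t\phi\|_\infty\leq\|\phi\|_\infty$ and measurability follows from the already established fact that $\Ma_t:M_b(\XXL)\to M_b(\XXL)$. The main (and really only) subtlety is the interplay between the $\GG$-membership requirement and the pathwise estimate: one must ensure the uniform-in-$n$ energy bound on the solutions at time $t$ holds on a single full-measure event, which is exactly what the explicit, initial-data-monotone form of $C_1$ in Proposition \ref{prop:allestimates2} provides; everything else is a routine application of Theorem \ref{eq:contdiprel} and dominated convergence.
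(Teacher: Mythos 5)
Your proof is correct, but it takes a genuinely different route from the paper's at the key step. Both arguments begin identically: reduce the claim to $\E\,\phi(\rho^n(t),u^n(t))\to\E\,\phi(\rho(t),u(t))$ and invoke Theorem \ref{eq:contdiprel} for the almost sure $L^2\times L^2$ convergence. The difference is in how one supplies the uniform-in-$n$ energy bound demanded by \eqref{eq:convseq}. You use the \emph{pathwise} estimate \eqref{eq:energypath} of Proposition \ref{prop:allestimates2}, together with the monotonicity of $C_1(\sigma W,A,\cdot)$ in its last argument, to dominate $\EnD(\rho^n(t),u^n(t))$ for all $n$ by a single almost surely finite random variable on one full-measure event; the defining property of $\GG$ then applies pathwise, and dominated convergence concludes. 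The paper instead uses only the \emph{moment} bound \eqref{eq:entro2} together with Chebyshev's inequality and a truncation: it splits the expectation over $\{\EnD_n\leq R\}$ and $\{\EnD_n>R\}$, applies the definition of $\GG$ and bounded convergence on the first set, bounds the contribution of the second by $2\bar{M}M_\phi/R$, and lets $R\to\infty$. Your route is arguably cleaner, since it avoids the $n$-dependent indicator $\indFn{\EnD_n\leq R}$ inside the almost sure convergence step (a point that deserves a little extra care in the paper's version); the price is reliance on the stronger pathwise bound and on verifying, from the Gronwall derivation, that $C_1$ is nondecreasing in the initial energy — true, but only implicit in the paper's statement that the constants are ``explicitly computable.'' The paper's argument is the more robust one in that a moment bound alone suffices, but both ingredients are proved in the paper and both proofs are sound.
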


\begin{proof}
Let $\{(\rho_0^n,u_0^n)\}_{n\in \NN}\subset \XXL$ be a sequence, complying with \eqref{eq:convseq}, namely
\begin{align}\label{eq:rew8}
\lim_{n\to \infty}(\rho_0^n,u_0^n)=(\rho_0,u_0) \quad \text{in } \XXL,  \quad \text{and}\quad
M:=\sup_{n\in \NN}\EnD(\rho_0^n,u_0^n)<\infty.
\end{align}
In light of Theorem \ref{eq:contdiprel},
\begin{align}\label{eq:dgsafhtr2}
\lim_{n\to \infty}(\rho(t;\rho_0^n),u(t;u_0^n))=(\rho(t;\rho_0),u(t;u_0)) \quad \text{a.s. in } \XXL,\quad \forall t\geq 0.
\end{align}
Moreover, in view of \eqref{eq:entro2} and \eqref{eq:rew8}, there exists a constant $\bar{M}:=\bar{M}(t,M,\sigma)$ independent of $n\in \NN$
such that 
$$
\E\EnD(\rho(t;\rho^n_0), u(t;u^n_0))\leq \bar{M},
$$
which, by Chebyshev's inequality, it implies that 
\begin{align}\label{eq:cheb}
\Prb \left[\EnD(\rho(t;\rho^n_0), u(t;u^n_0)) > R\right]\leq \frac{\bar{M}}{R},
\end{align}
for any $R>0$. Fix $\phi \in \GG$, and let 
\begin{align}\label{eq:phibound}
M_\phi:=\sup_{(\rho,u)\in\XXL} |\phi(\rho,u)|<\infty.
\end{align}
To show that $\Ma_t \phi\in \GG$, we are required to prove that for any fixed $t>0$ there holds
$$
\lim_{n\to \infty}\Ma_t \phi(\rho^n_0, u^n_0)=
\Ma_t \phi(\rho_0, u_0).
$$
Writing $\EnD_n=\EnD(\rho(t;\rho^n_0), u(t;u^n_0))$ for short, we have by definition that
\begin{align}
\Ma_t \phi(\rho^n_0, u^n_0)-\Ma_t \phi(\rho_0, u_0)&=\E\left[\phi(\rho(t;\rho^n_0), u(t;u^n_0))-\phi(\rho(t;\rho_0), u(t;u_0))\right]\notag\\
&=\E\left[(\phi(\rho(t;\rho^n_0), u(t;u^n_0))-\phi(\rho(t;\rho_0), u(t;u_0)))\indFn{\EnD_n\leq R}\right]\notag\\
&\quad +\E\left[(\phi(\rho(t;\rho^n_0), u(t;u^n_0))-\phi(\rho(t;\rho_0), u(t;u_0)))\indFn{\EnD_n> R}\right].
\end{align} 
In light of \eqref{eq:cheb}-\eqref{eq:phibound},
\begin{align}\label{eq:tgerwgs}
|\E\left[(\phi(\rho(t;\rho^n_0), u(t;u^n_0))-\phi(\rho(t;\rho_0), u(t;u_0)))\indFn{\EnD_n> R}\right]|\leq \frac{2 \bar{M}M_\phi}{R}.
\end{align}
Since $\phi\in \GG$, we also deduce by \eqref{eq:rew8} and \eqref{eq:dgsafhtr2} that
$$
\lim_{n\to \infty} (\phi(\rho(t;\rho^n_0), u(t;u^n_0))-\phi(\rho(t;\rho_0), u(t;u_0)))\indFn{\EnD_n\leq R}=0, \qquad \text{a.s.},
$$
and therefore by the bounded convergence theorem that
\begin{align}\label{eq:gfgsfgsgsad}
\lim_{n\to\infty}\E\left[(\phi(\rho(t;\rho^n_0), u(t;u^n_0))-\phi(\rho(t;\rho_0), u(t;u_0)))\indFn{\EnD_n\leq R}\right]=0.
\end{align}
Let us now arbitrarily fix $\eps>0$, and pick $R_\eps=4\bar{M}M_\phi/\eps $. Invoking \eqref{eq:tgerwgs}, we deduce that
$$
\E\left[(\phi(\rho(t;\rho^n_0), u(t;u^n_0))-\phi(\rho(t;\rho_0), u(t;u_0)))\indFn{\EnD_n> R}\right]\leq \frac\eps2.
$$
Moreover, from \eqref{eq:gfgsfgsgsad} there exists $n_\eps\in \NN$ such that
$$
|\E\left[(\phi(\rho(t;\rho^n_0), u(t;u^n_0))-\phi(\rho(t;\rho_0), u(t;u_0)))\indFn{\EnD_n\leq R}\right]|<\frac\eps2, \qquad \forall n\geq n_\eps.
$$
Thus,
$$
|\Ma_t \phi(\rho^n_0, u^n_0)-\Ma_t \phi(\rho_0, u_0)|< \eps, \qquad \forall t\geq0, n\geq n_\eps,
$$
and the proof is over.
\end{proof}

%



\subsection{Tightness of time-averaged measures}\label{sub:tight}
We prove the existence of an invariant measure for $\Ma_t$ via the classical Krylov-Bogoliubov procedure.
However, since
we are working in a non-complete metric space,  and with a Markov semigroup that is not Feller, some details
do not follow directly from the well-known theory. In what follows, the parameter $A>0$ appearing in \eqref{eq:cNSE:2} 
is fixed, but we suppress 
the dependence on it of all the quantities (except for bounds), in order to keep the notation as simple as possible. 
As initial conditions to
our problem, we fix $\rho_0=1$ and $u_0=0$. Notice that the corresponding energy vanishes, namely $\EnD(1, 0)=0$.
For $T>0$, define the time-averaged measure on $\XX$ by
\begin{align}\label{eq:timeave}
\mu_T(B)=\frac1T\int_0^T\Prb(\rho(t;1),u(t;0)\in B) \dd t,
\end{align}
where $B\in \mathcal{B}(\XXL)$. The first step is to prove tightness of the family $\{\mu_T\}_{T>0}$. We will make use of the following lemma.

\begin{lemma}\label{lem:sup}
Assume that $\rho\in H^1$ is such that $\rho> 0$ and
\begin{align}
\int_0^1\rho\, \dd x=1, \qquad M^2:=\int_0^1 \frac{\rho_x^2}{\rho^2}\dd x=\int_0^1 [(\log\rho)_x]^2\dd x <\infty.
\end{align}
Then, for $x\in [0,1]$, we have the pointwise bounds
\begin{align}\label{eq:Ubdd}
\e^{-M}\leq\rho(x)\leq  \e^M,
\end{align}
and, moreover
\begin{align}
\int_0^1 \rho_x^2 \dd x\leq  M^2\e^{2M}.
\end{align}
\end{lemma}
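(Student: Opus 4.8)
The plan is to mimic the argument already used in Lemma~\ref{lem:enbounds}, since the hypotheses here are tailored to isolate exactly the quantity $M^2 = \int_0^1 (\log\rho)_x^2\,\dd x$ rather than the full modified energy $\EnD$. First I would use the constraint $\int_0^1 \rho\,\dd x = 1$ together with the continuity of $\rho$ (which holds since $\rho \in H^1 \hookrightarrow C[0,1]$ in one dimension) to locate a point $x_0 \in (0,1)$ with $\rho(x_0) = 1$, equivalently $\log\rho(x_0) = 0$. Then for arbitrary $x \in [0,1]$ I would write
\begin{align}
|\log\rho(x)| = \left| \int_{x_0}^x (\log\rho)_y\,\dd y \right| \leq \int_0^1 |(\log\rho)_y|\,\dd y \leq \left( \int_0^1 [(\log\rho)_y]^2\,\dd y \right)^{1/2} = M,
\end{align}
using the Cauchy--Schwarz inequality on the interval of length at most one. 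This gives $\|\log\rho\|_{L^\infty} \leq M$, which upon exponentiating is precisely the two-sided bound \eqref{eq:Ubdd}.

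For the second bound, I would simply observe that $\rho_x = \rho\,(\log\rho)_x$ pointwise, so
\begin{align}
\int_0^1 \rho_x^2\,\dd x = \int_0^1 \rho^2\,[(\log\rho)_x]^2\,\dd x \leq \|\rho\|_{L^\infty}^2 \int_0^1 [(\log\rho)_x]^2\,\dd x \leq \e^{2M} M^2,
\end{align}
where the final step inserts the upper bound from \eqref{eq:Ubdd} just obtained.

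There is essentially no hard part here: the statement is a self-contained elementary estimate, and the only mild subtlety is justifying the pointwise identity $\rho_x = \rho(\log\rho)_x$ and the continuity/attainment of the value $\rho(x_0)=1$ — both of which follow from $\rho \in H^1(0,1)$ with $\rho > 0$ and the embedding $H^1(0,1) \hookrightarrow C([0,1])$, exactly as invoked in the proof of Lemma~\ref{lem:enbounds}. One could alternatively route through \eqref{eq:unifbound1}, but since the hypotheses are phrased directly in terms of $M$ it is cleaner to give the three-line direct argument above rather than to match constants with $\EnD$.
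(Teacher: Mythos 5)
Your proof is correct and follows essentially the same route as the paper's: locate $x_0$ with $\rho(x_0)=1$ via the mean-one constraint and continuity, bound $\|\log\rho\|_{L^\infty}$ by $M$ using the fundamental theorem of calculus and Cauchy--Schwarz, exponentiate, and then estimate $\int_0^1\rho_x^2\,\dd x\leq\|\rho\|_{L^\infty}^2\int_0^1\rho_x^2/\rho^2\,\dd x$. No discrepancies to report.
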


\begin{proof}
 Firstly, we notice 
since $\rho$ is continuous and has mean 1, there exists $x_0\in (0,1)$ such
that $\rho(x_0)=1$. Therefore, for every $x\in [0,1]$ we have
\begin{align}
|\log \rho(x)|=\left|\int_{x_0}^x (\log\rho)_y \dd y \right|\leq \left[\int_0^1[(\log\rho)_y]^2\dd y\right]^{1/2}=M, \qquad \forall x\in[0,1].
\end{align}
As a consequence,
\begin{align}
\e^{-M}\leq \rho(x)\leq \e^{M}, \qquad \forall x\in [0,1],
\end{align}
proving  \eqref{eq:Ubdd}. Moreover,
\begin{align}
\int_0^1 \rho_x^2\dd x\leq \|\rho\|_{L^\infty}^2\int_0^1\frac{\rho_x^2}{\rho^{2}}\dd x.
\end{align}
The claim then follows from \eqref{eq:Ubdd}.
\end{proof}

As a consequence, tightness of $\{\mu_T\}_{T>0}$ follows in a straightforward manner.

\begin{proposition}\label{prop:tightseq}
The family of probability measures $\{\mu_T\}_{T>0}\subset \mathfrak{P}(\XXL)$ is tight. Hence,
there exists a subsequence $T_j\to \infty$ and a measure $\mu\in \mathfrak{P}(\XXL)$ such that
\begin{align}\label{eq:measprop1}
\lim_{j\to \infty} \int_\XX \phi\, \dd\mu_{T_j} = \int_\XX \phi\, \dd\mu, \qquad \forall \phi \in C_b(\XXL). 
\end{align}
\end{proposition}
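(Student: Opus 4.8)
The plan is to prove tightness of $\{\mu_T\}_{T>0}$ by exhibiting, for each $\eps>0$, a compact set $K_\eps\subset\XXL$ with $\mu_T(K_\eps^c)<\eps$ uniformly in $T$, and then to invoke Prokhorov's theorem to extract a weakly convergent subsequence. The natural candidate for $K_\eps$ is a sublevel set of the energy functional $\EnD$, since Proposition \ref{prop:allestimates} (in particular \eqref{eq:entro2}) gives the uniform-in-time bound
\begin{align}
\frac1T\int_0^T \E\,\EnD(\rho(t;1),u(t;0))\,\dd t\leq \EnD(1,0)+\tfrac12\|\sigma\|_{L^\infty}^2 = \tfrac12\|\sigma\|_{L^\infty}^2,
\end{align}
using that $\EnD(1,0)=0$. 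By Chebyshev this yields $\mu_T(\{(\rho,u):\EnD(\rho,u)>R\})\leq \|\sigma\|_{L^\infty}^2/(2R)$ uniformly in $T$, so it suffices to show that $\{(\rho,u)\in\XX:\EnD(\rho,u)\leq R\}$ is relatively compact in the $L^2\times L^2$ topology for each $R>0$.

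The compactness of these sublevel sets is where the real work lies, and it is exactly what Lemma \ref{lem:enbounds} (and the auxiliary Lemma \ref{lem:sup}) is designed to supply. Indeed, $\EnD(\rho,u)\leq R$ forces, via \eqref{eq:unifbound1}--\eqref{eq:unifbound2}, uniform $L^\infty$ bounds on $\rho$ and $1/\rho$ together with a uniform bound on $\|\rho_x\|_{L^2}$; hence this set of densities is bounded in $H^1$ and therefore relatively compact in $L^2$ by the compact embedding $H^1(0,1)\hookrightarrow\hookrightarrow L^2(0,1)$ (Rellich--Kondrachov). For the velocity component, from $\EnD(\rho,u)\leq R$ and the lower bound $\rho\geq \e^{-[8R]^{1/2}}$ one gets $\|u\|_{L^2}^2\leq \e^{[8R]^{1/2}}\int_0^1\rho u^2\,\dd x\leq 2\e^{[8R]^{1/2}}\En(\rho,u)$, and controlling $\En$ by $\EnD$ plus the lower-order terms (which are bounded on the sublevel set via \eqref{eq:insfdgk}) gives a uniform $L^2$ bound. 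To upgrade this to relative compactness in $L^2$ one needs a little more than an $L^2$ bound, so I would instead use the Poincaré-type inequality \eqref{eq:poinc1}, $\int_0^1\rho u^2\,\dd x\leq\|u_x\|_{L^2}^2$, in the reverse direction is not available; rather, I would bound $\|u_x\|_{L^2}$ directly: note $\EnD\geq \En\geq \frac12\int_0^1\rho u^2 - $ (nothing, since $\rho\log\rho\geq 0$), which only controls $\|u\|_{L^2}$, not $\|u_x\|_{L^2}$.

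This last point is the main obstacle: $\EnD$ does \emph{not} control $\|u_x\|_{L^2}$, so the velocities in a sublevel set are a priori only bounded in $L^2$, not in $H^1$, and an $L^2$-bounded set is not relatively compact in $L^2$. The resolution must come from the time integral: \eqref{eq:entro2} also gives $\frac1T\int_0^T\E\|u_x(t)\|_{L^2}^2\,\dd t\leq\|\sigma\|_{L^\infty}^2$, so the \emph{time-averaged} measure sees velocities with bounded $\dot H^1$ norm with high probability. Thus the correct compact set is $K_{R}=\{(\rho,u)\in\XX:\EnD(\rho,u)\leq R,\ \|u_x\|_{L^2}\leq R\}$, and tightness follows by combining the Chebyshev estimate for $\EnD$ with a second Chebyshev estimate for $\int_0^1|u_x|^2\,\dd x$ against the bound $\|\sigma\|_{L^\infty}^2$. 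On $K_R$ the densities lie in a bounded subset of $H^1$ and the velocities in a bounded subset of $H^1_0$, so $K_R$ is relatively compact in $L^2\times L^2$ by Rellich; replacing $K_R$ by its closure in $\XXL$ gives the desired compact set (one checks the limit stays in $\XX$ using the uniform upper/lower bounds on $\rho$ from Lemma \ref{lem:sup}, which pass to $L^2$-limits along a further a.e.-convergent subsequence, so $\rho>0$ and $\rho\in H^1$ are preserved; the constraints $\int_0^1\rho=1$ and the boundary conditions are clearly stable under $L^2$-convergence). Given tightness, Prokhorov's theorem yields a subsequence $T_j\to\infty$ and $\mu\in\mathfrak P(\XXL)$ with $\mu_{T_j}\to\mu$ weakly, which is precisely \eqref{eq:measprop1}.
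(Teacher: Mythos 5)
There is a genuine gap in the first step. You claim that \eqref{eq:entro2} gives the uniform-in-$T$ bound $\frac1T\int_0^T \E\,\EnD(\rho(t;1),u(t;0))\,\dd t\leq \tfrac12\|\sigma\|_{L^\infty}^2$, but \eqref{eq:entro2} only yields $\E\,\EnD(t)\leq \EnD(1,0)+\tfrac12\|\sigma\|^2_{L^\infty}\,t$, so the time average is bounded by $\tfrac14\|\sigma\|^2_{L^\infty}T$ and grows linearly in $T$. There is no uniform-in-time bound on $\E\,\EnD(t)$ anywhere in the paper (the exponential martingale estimate \eqref{eq:entro3} also carries the linear drift $\tfrac12\|\sigma\|^2_{L^\infty}t$), and none is to be expected, since the energy balance \eqref{eq:enrbval} contains no dissipation of $\EnD$ itself. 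Consequently your Chebyshev estimate $\mu_T(\{\EnD>R\})\lesssim 1/R$ uniformly in $T$ fails, and the set $K_R=\{\EnD\leq R,\ \|u_x\|_{L^2}\leq R\}$ cannot be shown to carry most of the mass of $\mu_T$ uniformly in $T$. This is precisely the obstruction the paper flags in the introduction: the density must be controlled by a quantity with the correct \emph{time-averaged} growth.

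You correctly diagnosed this issue for the velocity (only the time-integrated dissipation $\int_0^T\|u_x\|^2_{L^2}\,\dd t$ is uniformly controlled), but the identical mechanism is needed for the density. The only density quantity whose time-averaged expectation is bounded uniformly in $T$ is the dissipation term $A^2\|(\log\rho)_x\|^2_{L^2}$ appearing in \eqref{eq:entro2}. The paper therefore takes
\begin{align}
K_R=\left\{(\rho,u)\in \XX: \int_0^1 u_x^2\,\dd x+\int_0^1 \frac{\rho_x^2}{\rho^2}\,\dd x\leq R^2\right\},
\end{align}
uses Lemma \ref{lem:sup} to convert the bound on $\|(\log\rho)_x\|_{L^2}$ into $L^\infty$ bounds on $\rho$ and $\rho^{-1}$ and an $H^1$ bound on $\rho$ (so that $K_R$ is contained in a closed, bounded subset $C_{S_R}$ of $H^1\times H^1_0$ with uniform positivity of $\rho$, hence a compact subset of $\XXL$), and then a single Chebyshev estimate against the dissipation in \eqref{eq:entro2} gives $\mu_T(C_{S_R})\geq 1-\|\sigma\|^2_{L^\infty}/(\min\{1,A^2\}R^2)$ uniformly in $T$. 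If you replace your $\EnD$-sublevel constraint by the $\|(\log\rho)_x\|_{L^2}$ constraint, the rest of your argument (Poincar\'e for the $L^2$ bound on $u$, Rellich for compactness, the check that limits stay in $\XX$, Prokhorov in the direction not requiring completeness, and Portmanteau to verify $\mu(\XX)=1$) goes through as you describe.
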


\begin{proof}
For any fixed $R,S\geq 1$, define the sets
\begin{align}
K_R=\left\{(\rho,u)\in \XX: \int_0^1 u_x^2 \dd x+ \int_0^1 \frac{\rho_x^2}{\rho^{2}}\dd x \leq R^2\right\}
\end{align}
and 
\begin{align}\label{eq:setcs}
C_S=\left\{(\rho,u)\in \XX:   \int_0^1 u_x^2 \dd x + \int_0^1\rho_x^2\dd x+ \|\rho\|_{L^\infty}+\|\rho^{-1}\|_{L^\infty}\leq S\right\}.
\end{align}
Note that since $C_S$ is bounded in $H^1\times H^1_0$ and is closed, it is compact in $\XXL$.
By Lemma \ref{lem:sup}, we have that if $(\rho,u)\in K_R$, then
\begin{align}
\int_0^1\rho_x^2\dd x\leq R^2\e^{2R},\quad \int_0^1 u_x^2 \dd x \leq R^2,\quad \|\rho\|_{L^\infty}\leq \e^{R},\quad \|\rho^{-1}\|_{L^\infty}\leq \e^R,
\end{align}
In particular, if  $R\geq 1$, we obtain
\begin{align}\label{eq:srdef}
\int_0^1\rho_x^2\dd x+\int_0^1 u_x^2 \dd x + \|\rho\|_{L^\infty}+\|\rho^{-1}\|_{L^\infty}\leq S_R:=4R^2\e^{2R},
\end{align}
which therefore translates into the set inclusion
\begin{align}\label{eq:inclusion}
K_R\subset C_{S_R}.
\end{align}
In view of \eqref{eq:inclusion} and Chebyshev's inequality we have
\begin{align}
\mu_T(C_{S_R})&\geq \mu_T (K_R)=1-\mu_T (\XX\setminus K_R)\\
&=1-\frac{1}{T}\int_0^T \Prb \left[\int_0^1 \frac{\rho_x^2}{\rho^{2}}\dd x+ \int_0^1 u_x^2 \dd x > R^2\right]\\
&\geq 1-\frac{1}{R^2} \frac{1}{T} \int_0^T \E \left[\int_0^1 \frac{\rho_x^2}{\rho^{2}}\dd x+ \int_0^1 u_x^2 \dd x\right].
\end{align}
In light of the energy inequality \eqref{eq:entro2} and the fact that $\EnD(1, 0)=0$, it follows that
\begin{align}\label{eq:meastcr1}
\mu_T(C_{S_R})\geq 1-\frac{\|\sigma\|_{L^\infty}^2}{\min\{1,A^2\}}\frac{1}{R^2}.
\end{align}
Hence, the family $\{\mu_T\}_{T>0}$ is tight, and therefore there exists a subsequential limit $\mu\in \mathfrak{P}(\XXL)$. Note that this
uses the direction of Prokhorov's theorem that does not require completeness (see \cite{Billingsley99}*{Theorem 6.1}).
Now, since $C_{S_R}$ is a closed set, the Portmanteau theorem implies
\begin{align}\label{eq:meastcr2}
\mu(\XX)\geq \mu(C_{S_R}) \geq\limsup_{T\to\infty}\mu_T(C_{S_R}) \geq 1-\frac{\|\sigma\|_{L^\infty}^2}{\min\{1,A^2\}}\frac{1}{R^2}.
\end{align}
Notice again that this does not require the metric space $\XXL$ to be complete (see \cite{Billingsley99}*{Theorem 2.1}).
Thus
\begin{align}
\mu(\XX)\geq \lim_{R\to \infty}\mu(C_{S_R})=1.
\end{align}
The proof is therefore concluded.
\end{proof}

\subsection{Invariance of the limit measure}
We aim to prove the following proposition.
\begin{proposition}\label{prop:inv}
For every fixed $A>0$, the Markov semigroup $\{\Ma_t\}_{t\geq 0}$ associated to \eqref{eq:cNSE:1}-\eqref{eq:boundary1}
possesses an invariant probability measure $\mu_A\in \mathfrak{P}(\XXL)$. Furthermore,
\begin{align}\label{eq:measprop2}
\int_{\XX} \left[A^2\|(\log\rho)_x\|_{L^2}^2+\|u_x\|_{L^2}^2\right]\mu_A(\rho,u) \leq \|\sigma\|^2_{L^\infty}.
\end{align}
\end{proposition}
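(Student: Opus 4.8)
The plan is to take $\mu_A := \mu$, where $\mu$ is the subsequential weak limit of the time-averaged measures $\{\mu_{T_j}\}$ furnished by Proposition~\ref{prop:tightseq}, and to verify invariance by a Krylov--Bogoliubov argument adapted to the non-Feller setting. Fix $\phi \in C_b(\XXL)$ and $t \ge 0$. By the Markov (Chapman--Kolmogorov) property of the solution---a consequence of pathwise uniqueness (Theorem~\ref{eq:unique}) and the independence of the increments of $W$---one has $\E\,\Ma_t\phi(\rho(s;1),u(s;0)) = \E\,\phi(\rho(s+t;1),u(s+t;0))$, so that
\begin{align}
\int_\XX \Ma_t\phi\,\dd\mu_{T_j} - \int_\XX \phi\,\dd\mu_{T_j}
= \frac{1}{T_j}\left( \int_{T_j}^{T_j+t}\E\,\phi(\rho(s;1),u(s;0))\,\dd s - \int_0^{t}\E\,\phi(\rho(s;1),u(s;0))\,\dd s \right),
\end{align}
whose modulus is at most $2t\|\phi\|_{L^\infty}/T_j \to 0$. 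Since $\int_\XX\phi\,\dd\mu_{T_j}\to\int_\XX\phi\,\dd\mu$ by \eqref{eq:measprop1}, it remains only to show that $\int_\XX\Ma_t\phi\,\dd\mu_{T_j}\to\int_\XX\Ma_t\phi\,\dd\mu$; granting this, we get $\int_\XX\Ma_t\phi\,\dd\mu = \int_\XX\phi\,\dd\mu$ for all $\phi\in C_b(\XXL)$ and $t\ge0$, which forces $\Ma_t^*\mu = \mu$.

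The obstruction is that $\Ma_t\phi$ need not lie in $C_b(\XXL)$. The remedy is Lemma~\ref{lem:GtoG}: since $C_b(\XXL)\subset\GG$, we have $\Ma_t\phi\in\GG$, and members of $\GG$ are continuous when restricted to the compact sets $C_{S_R}$ of Proposition~\ref{prop:tightseq}. Indeed, on $C_{S_R}$ the quantities $\|\rho\|_{L^\infty}$, $\|\rho^{-1}\|_{L^\infty}$, $\|\rho_x\|_{L^2}$ and $\|u_x\|_{L^2}$ are bounded, hence so is $\EnD$; therefore any $\XXL$-convergent sequence inside the closed set $C_{S_R}$ has $\sup_n\EnD(\rho^n,u^n)<\infty$, and the defining property of $\GG$ together with sequential continuity on the compact metric space $C_{S_R}$ yields $\psi|_{C_{S_R}}\in C(C_{S_R})$ for every $\psi\in\GG$. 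By the Tietze extension theorem, pick $\psi_R\in C_b(\XXL)$ with $\psi_R = \psi$ on $C_{S_R}$ and $\|\psi_R\|_{L^\infty}\le\|\psi\|_{L^\infty}$. Using $\mu_{T_j}(\XX\setminus C_{S_R})\le \|\sigma\|_{L^\infty}^2/(\min\{1,A^2\}R^2)$ from \eqref{eq:meastcr1} and $\mu(\XX\setminus C_{S_R})\le \|\sigma\|_{L^\infty}^2/(\min\{1,A^2\}R^2)$ from \eqref{eq:meastcr2}, we obtain
\begin{align}
\left| \int_\XX \psi\,\dd\mu_{T_j} - \int_\XX \psi\,\dd\mu \right|
\le \left| \int_\XX \psi_R\,\dd\mu_{T_j} - \int_\XX \psi_R\,\dd\mu \right| + \frac{4\|\psi\|_{L^\infty}\|\sigma\|_{L^\infty}^2}{\min\{1,A^2\}}\,\frac{1}{R^2}.
\end{align}
Letting $j\to\infty$ (the first term vanishes by \eqref{eq:measprop1} applied to $\psi_R\in C_b(\XXL)$) and then $R\to\infty$ proves $\int_\XX\psi\,\dd\mu_{T_j}\to\int_\XX\psi\,\dd\mu$ for every $\psi\in\GG$, and in particular for $\psi = \Ma_t\phi$; this completes the proof of invariance.

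For the bound \eqref{eq:measprop2}, set $G(\rho,u) = A^2\|(\log\rho)_x\|_{L^2}^2 + \|u_x\|_{L^2}^2 \ge 0$. Applying the energy inequality \eqref{eq:entro2} with $(\rho_0,u_0) = (1,0)$ (so $\EnD(1,0)=0$), dropping the nonnegative term $\E\,\EnD(\rho,u)(T)$ (cf.\ \eqref{eq:inequalit1}), and using Tonelli, we obtain
\begin{align}
\int_\XX G\,\dd\mu_T = \frac{1}{T}\int_0^T \E\,G(\rho(s;1),u(s;0))\,\dd s \le \|\sigma\|_{L^\infty}^2, \qquad \forall T>0.
\end{align}
It then suffices to know that $G$ is lower semicontinuous and nonnegative on $\XXL$: the Portmanteau theorem in the form valid on general metric spaces (\cite{Billingsley99}*{Theorem 2.1}), applied to $G\wedge N$ and followed by monotone convergence in $N$, gives $\int_\XX G\,\dd\mu_A \le \liminf_{j\to\infty}\int_\XX G\,\dd\mu_{T_j} \le \|\sigma\|_{L^\infty}^2$. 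Lower semicontinuity of $u\mapsto\|u_x\|_{L^2}^2$ on $\XXL$ is standard weak lower semicontinuity of a convex functional under $L^2$ convergence. For $\rho\mapsto\|(\log\rho)_x\|_{L^2}^2$, along an $\XXL$-convergent sequence with $\liminf_n\|(\log\rho^n)_x\|_{L^2}^2<\infty$ one uses Lemma~\ref{lem:sup} to bound $\|\rho^n\|_{L^\infty}$, $\|(\rho^n)^{-1}\|_{L^\infty}$ and $\|\rho^n_x\|_{L^2}$ along a subsequence realizing the $\liminf$, whence $\rho^n\to\rho$ and $1/\rho^n\to1/\rho$ uniformly, $\rho^n_x\rightharpoonup\rho_x$ weakly in $L^2$, and therefore $(\log\rho^n)_x\rightharpoonup(\log\rho)_x$ weakly in $L^2$; weak lower semicontinuity of the $L^2$ norm finishes the argument.

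The main obstacle I anticipate is precisely the limit passage $\int_\XX\Ma_t\phi\,\dd\mu_{T_j}\to\int_\XX\Ma_t\phi\,\dd\mu$ of the second paragraph, i.e.\ running Krylov--Bogoliubov without the Feller property and on a non-complete space; what makes it work is the interplay between Lemma~\ref{lem:GtoG} (invariance of $\GG$) and the fact that the tightness sets $C_{S_R}$ are energy sublevel--type sets, on which $\GG$-functions are genuinely continuous. A secondary point requiring care is the lower semicontinuity of $\rho\mapsto\|(\log\rho)_x\|_{L^2}^2$ on $\XXL$, where the uniform density bounds of Lemma~\ref{lem:sup} are again the crucial ingredient.
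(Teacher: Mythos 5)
Your argument is correct and, for the invariance part, is essentially the paper's own proof: the extension of the convergence $\int_\XX\psi\,\dd\mu_{T_j}\to\int_\XX\psi\,\dd\mu$ from $C_b(\XXL)$ to $\psi\in\GG$ via Tietze extension on the compact energy-sublevel sets $C_{S_R}$, combined with the tightness bounds \eqref{eq:meastcr1}--\eqref{eq:meastcr2}, is exactly the paper's Lemma~\ref{lem:exttoG}, and the subsequent shift argument using Lemma~\ref{lem:GtoG} is identical. The only genuine divergence is in the derivation of \eqref{eq:measprop2}: the paper disposes of it in one line by evaluating \eqref{eq:entro2} along a statistically stationary solution distributed as $\mu$, whereas you pass the time-averaged bound $\int_\XX G\,\dd\mu_T\le\|\sigma\|^2_{L^\infty}$ to the limit using lower semicontinuity of $G(\rho,u)=A^2\|(\log\rho)_x\|_{L^2}^2+\|u_x\|_{L^2}^2$ on $\XXL$ (truncation, Portmanteau for lower semicontinuous functions, monotone convergence). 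Your route is more self-contained --- it avoids having to justify the existence of a stationary realization --- at the cost of the extra verification that $\rho\mapsto\|(\log\rho)_x\|_{L^2}^2$ is lower semicontinuous under $L^2$ convergence, which you correctly reduce to the uniform bounds of Lemma~\ref{lem:sup} and weak lower semicontinuity of the $L^2$ norm. Both arguments are sound.
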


The main issue here is that $\Ma_t$ is not known to be Feller, and therefore we cannot directly apply \eqref{eq:measprop1}
to $\Ma_t\phi$. As a first step, we extend property \eqref{eq:measprop1} to the class $\GG$.

\begin{lemma}\label{lem:exttoG}
Let $\{\mu_{T_j}\}_{j\in\NN}\subset \mathfrak{P}(\XXL)$ be the convergent sequence in Proposition \ref{prop:tightseq}. Then
\begin{align}
\lim_{j\to \infty} \int_\XX \phi\, \dd\mu_{T_j} = \int_\XX \phi\, \dd\mu, \qquad \forall \phi \in \GG. 
\end{align}
\end{lemma}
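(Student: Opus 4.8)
The plan is to upgrade the convergence \eqref{eq:measprop1} from $C_b(\XXL)$ to the class $\GG$ by a cutoff argument onto the compact sets $C_{S}$ of \eqref{eq:setcs}, combined with the Tietze extension theorem. The idea is that although a function $\phi\in\GG$ need not be $L^2$-continuous on all of $\XXL$, it \emph{is} $\XXL$-continuous on each $C_S$ (because $C_S$ carries a uniform $\EnD$-bound), so $\phi$ agrees there with a genuine element of $C_b(\XXL)$, and the tightness estimates \eqref{eq:meastcr1}--\eqref{eq:meastcr2} control the error incurred outside $C_S$ uniformly in $j$.

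\textbf{Step 1: $\phi|_{C_S}$ is continuous for every $S\ge 1$.} Recall from the proof of Proposition \ref{prop:tightseq} that $C_S$ is compact (hence closed) in $\XXL$. The key point is that membership in $C_S$ forces a finite bound on $\EnD$: writing out the four terms of $\EnD$ and using that $\rho$ and $\rho^{-1}$ are bounded in $L^\infty$, that $\rho_x$ and $u_x$ are bounded in $L^2$, and the Poincar\'e inequality \eqref{eq:poinc1}, each term is estimated by Cauchy--Schwarz by a constant depending only on $S$ (and $A$); call it $C(S)$. Consequently, if $(\rho^n,u^n)\subset C_S$ with $(\rho^n,u^n)\to(\rho,u)$ in $\XXL$, then $(\rho,u)\in C_S\subset\XX$ by closedness, and $\sup_n\EnD(\rho^n,u^n)\le C(S)<\infty$, so the definition of $\GG$ gives $\phi(\rho^n,u^n)\to\phi(\rho,u)$. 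Since $\XXL$ is metric, this sequential continuity is genuine continuity of $\phi|_{C_S}$.

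\textbf{Step 2: Tietze extension and the cutoff estimate.} Fix $\phi\in\GG$ and set $M_\phi:=\sup_{\XXL}|\phi|<\infty$. For $R\ge 1$ and $S_R:=4R^2\e^{2R}$ as in \eqref{eq:srdef}, apply the Tietze extension theorem on the metric space $\XXL$ to the continuous function $\phi|_{C_{S_R}}$ on the closed set $C_{S_R}$, obtaining $\phi_R\in C_b(\XXL)$ with $\|\phi_R\|_\infty\le M_\phi$ and $\phi_R=\phi$ on $C_{S_R}$. Then for any probability measure $\nu$ on $\XXL$,
\[
\left|\int_\XX\phi\,\dd\nu-\int_\XX\phi_R\,\dd\nu\right|=\left|\int_{\XX\setminus C_{S_R}}(\phi-\phi_R)\,\dd\nu\right|\le 2M_\phi\,\nu(\XX\setminus C_{S_R}).
\]
Taking $\nu=\mu_{T_j}$ and $\nu=\mu$ and using \eqref{eq:meastcr1}--\eqref{eq:meastcr2}, each such error is at most $2M_\phi\|\sigma\|_{L^\infty}^2/(\min\{1,A^2\}R^2)$, uniformly in $j$. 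Since $\phi_R\in C_b(\XXL)$, \eqref{eq:measprop1} gives $\int_\XX\phi_R\,\dd\mu_{T_j}\to\int_\XX\phi_R\,\dd\mu$, so by the triangle inequality
\[
\limsup_{j\to\infty}\left|\int_\XX\phi\,\dd\mu_{T_j}-\int_\XX\phi\,\dd\mu\right|\le\frac{4M_\phi\|\sigma\|_{L^\infty}^2}{\min\{1,A^2\}}\frac{1}{R^2},
\]
and letting $R\to\infty$ concludes the proof.

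The main (indeed only non-routine) obstacle is Step 1: one must verify that the $L^\infty$/$H^1$ constraints defining $C_S$ are precisely strong enough to pin down a uniform $\EnD$-bound, so that the defining property of $\GG$ can be invoked on $C_S$. Once that bookkeeping is done, the rest is the standard Tietze-plus-cutoff scheme for testing weak convergence against a restricted function class, with the tightness already supplied by Proposition \ref{prop:tightseq}.
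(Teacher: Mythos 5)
Your proof is correct and follows essentially the same route as the paper: Tietze extension of $\phi$ restricted to the compact sets $C_{S_R}$, the uniform-in-$j$ tail bound from the tightness estimates, and the known convergence for $C_b(\XXL)$ test functions. Your Step 1, verifying that the constraints defining $C_S$ force a uniform $\EnD$-bound so that the defining property of $\GG$ yields continuity of $\phi|_{C_S}$, is a point the paper asserts without justification, and your argument for it is the right one.
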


\begin{proof}
Let $\phi\in\GG$, arbitrarily fix $R\geq 1$, and consider the compact sets $C_{S_R}$ from \eqref{eq:setcs}, where
$S_R$ is given by \eqref{eq:srdef}. Since $\phi \in \GG$, $\phi$ is bounded on $\XXL$, so that
$$
M_\phi:=\sup_{(\rho,u)\in\XXL} |\phi(\rho,u)|<\infty.
$$
Moreover,
the restriction of $\phi$ to $C_{S_R}$ is continuous (in 
the $L^2\times L^2$ metric) on $C_{S_R}$, for every $R\geq 1$. Since  $C_{S_R}$ is a closed set, Tietze extension theorem (see e.g. 
\cite{Kelley55}) guarantees the existence of a function $\tilde\phi_R\in C_b(\XXL)$ such that
\begin{align}\label{eq:extprop}
\tilde\phi_R=\phi \quad \text{on } C_{S_R}, \qquad \sup_{(\rho,u)\in\XXL}|\tilde\phi_R(\rho,u)|\leq M_\phi, \qquad \forall R\geq 1.
\end{align}
Now,
\begin{align}
\left|\int_{\XX}\phi\, \dd\mu_{T_j}-\int_\XX \phi\, \dd\mu\right|\leq 
\left|\int_{\XX}(\phi-\tilde\phi_R)\, \dd\mu_{T_j}\right|+
\left|\int_{\XX}\tilde\phi_R\, \dd\mu_{T_j}-\int_\XX \tilde\phi_R\, \dd\mu\right|+
\left|\int_{\XX}(\tilde\phi_R-\phi)\, \dd\mu\right|
\end{align}
Fix $\eps >0$.
By \eqref{eq:meastcr1} and the above \eqref{eq:extprop}, there exists $R_\eps\geq 1$ such that
$$
\left|\int_{\XX}(\phi-\tilde\phi_{R_\eps})\, \dd\mu_{T_j}\right|=\left|\int_{\XX\setminus C_{S_{R_\eps}}}(\phi-\tilde\phi_{R_\eps})\, \dd\mu_{T_j}\right|\leq 2M_\phi \mu_{T_j}(\XX\setminus C_{S_{R_\eps}})\leq \frac{2M_\phi\|\sigma\|_{L^\infty}^2}{\min\{1,A^2\}}\frac{1}{R_\eps^2}<\frac{\eps}{3}.
$$
uniformly for $j\in \NN$, and, similarly, in light of \eqref{eq:meastcr2},
$$
\left|\int_{\XX}(\tilde\phi_{R_\eps}-\phi)\, \dd\mu\right|<\frac{\eps}{3}.
$$
Since $\tilde\phi_{R_\eps}\in C_b(\XXL)$, property \eqref{eq:measprop1} implies the existence of $j_\eps\in \NN$ such that
$$
\left|\int_{\XX}\tilde\phi_{R_\eps}\, \dd\mu_{T_j}-\int_\XX \tilde\phi_{R_\eps}\, \dd\mu\right| <\frac{\eps}{3},\qquad \forall j\geq j_\eps.
$$
Hence, for every $\eps>0$, there exists $j_\eps\in \NN$ such that
$$
\left|\int_{\XX}\phi\, \dd\mu_{T_j}-\int_\XX \phi\, \dd\mu\right|<\eps, \qquad \forall j\geq j_\eps,
$$
which proves the claim.
\end{proof}

We are now in the position to prove invariance of the limiting measure and the bound \eqref{eq:measprop2}.
\begin{proof}[Proof of Proposition \ref{prop:inv}]
As before, $A>0$ is fixed and we suppress 
the various dependences on it. Let $\phi \in C_b(\XXL)$ be fixed. Thanks to Lemma \ref{lem:GtoG},
$$
\Ma_t\phi \in \GG, \qquad \forall t>0.
$$
In view of Lemma \ref{lem:exttoG}, we then have that
$$
\lim_{j\to \infty} \int_\XX \Ma_t\phi\, \dd\mu_{T_j} = \int_\XX \Ma_t\phi\, \dd\mu, \qquad \forall t\geq 0.
$$
Hence, by \eqref{eq:timeave}, we conclude that
\begin{align}
 \int_\XX \Ma_t\phi\, \dd\mu
&=\lim_{j\to \infty} \int_\XX \Ma_t\phi\, \dd\mu_{T_j} 
=\lim_{j\to \infty} \frac{1}{T_j}\int_0^{T_j} \Ma_{t+s} \phi(1,0)\dd s
=\lim_{j\to \infty} \frac{1}{T_j}\int_t^{T_j+t} \Ma_{s} \phi(1,0)\dd s\notag\\
&=\lim_{j\to \infty} \left(  \frac{1}{T_j}\int_0^{T_j} \Ma_{s} \phi(1,0)\dd s+  \frac{1}{T_j}\int_{T_j}^{T_j+t} \Ma_{s} \phi(1,0)\dd s
-\frac{1}{T_j}\int_0^{t} \Ma_{s} \phi(1,0)\dd s\right)\notag\\
&=\lim_{j\to \infty} \int_\XX\phi\, \dd\mu_{T_j} = \int_\XX \phi\, \dd\mu\notag,
\end{align}
so that invariance follows from the arbitrariness of $\phi$. Lastly, \eqref{eq:measprop2} is deduced
directly from \eqref{eq:entro2}. Indeed, is $(\rho^S,u^S)\in \XX$ is a statistically stationary solution
distributed as $\mu$, the we can use \eqref{eq:entro2} to derive the bound
\begin{align}
\E\|u^S_x\|^2_{L^2} + A^2\E\|(\log \rho^S)_x\|_{L^2}^2
\leq \| \sigma \|^2_{L^\infty},
\end{align}
which is precisely \eqref{eq:measprop2} after the usual change of variables. The proof is over.
\end{proof}

\subsection*{Acknowledgements}
The authors would like to thank Jacob Bedrossian, Sandra Cerrai, Dave Levermore, Jonathan Mattingly, Sam Punshon-Smith and 
Mohammed Ziane for helpful discussions. We also thank the program ``New Challenges
in PDE: Deterministic Dynamics and Randomness in High Infinite Dimensional Systems'', held at the 
Mathematical Science Research Institute (MSRI) in the Fall 2015, where some initial steps in the work were formulated.
MCZ was in part supported by the National Science Foundation under the award DMS-1713886.  K.T.  gratefully acknowledges the support  in part by the National Science Foundation under the award DMS-1614964.
NGH was in part supported by the National Science Foundation under the award DMS-1313272 and Simons Foundation 515990.

\begin{bibdiv}
\begin{biblist}

\bib{Billingsley99}{book}{
      author={Billingsley, Patrick},
       title={Convergence of probability measures},
     edition={Second},
      series={Wiley Series in Probability and Statistics: Probability and
  Statistics},
   publisher={John Wiley \& Sons, Inc., New York},
        date={1999},
        note={A Wiley-Interscience Publication},
}

\bib{BFH16}{article}{
      author={{Breit}, D.},
      author={{Feireisl}, E.},
      author={{Hofmanova}, M.},
       title={{Local strong solutions to the stochastic compressible
  Navier-Stokes system}},
        date={2016},
     journal={ArXiv e-prints},
      eprint={1606.05441},
}

\bib{BFHM17}{article}{
      author={{Breit}, D.},
      author={{Feireisl}, E.},
      author={{Hofmanova}, M.},
      author={{Maslowski}, B.},
       title={{Stationary solutions to the compressible Navier-Stokes system
  driven by stochastic forces}},
        date={2017},
     journal={ArXiv e-prints},
      eprint={1703.03177},
}

\bib{BFH15}{article}{
      author={Breit, Dominic},
      author={Feireisl, Eduard},
      author={Hofmanov\'a, Martina},
       title={Incompressible limit for compressible fluids with stochastic
  forcing},
        date={2016},
     journal={Arch. Ration. Mech. Anal.},
      volume={222},
      number={2},
       pages={895\ndash 926},
}

\bib{BH14}{article}{
      author={Breit, Dominic},
      author={Hofmanov\'a, Martina},
       title={Stochastic {N}avier-{S}tokes equations for compressible fluids},
        date={2016},
     journal={Indiana Univ. Math. J.},
      volume={65},
      number={4},
       pages={1183\ndash 1250},
}

\bib{BKL01}{article}{
      author={Bricmont, J.},
      author={Kupiainen, A.},
      author={Lefevere, R.},
       title={Ergodicity of the 2{D} {N}avier-{S}tokes equations with random
  forcing},
        date={2001},
     journal={Comm. Math. Phys.},
      volume={224},
      number={1},
       pages={65\ndash 81},
         url={http://dx.doi.org/10.1007/s002200100510},
}

\bib{BKL02}{article}{
      author={Bricmont, J.},
      author={Kupiainen, A.},
      author={Lefevere, R.},
       title={Exponential mixing of the 2{D} stochastic {N}avier-{S}tokes
  dynamics},
        date={2002},
     journal={Comm. Math. Phys.},
      volume={230},
      number={1},
       pages={87\ndash 132},
         url={http://dx.doi.org/10.1007/s00220-002-0708-1},
}

\bib{CH}{book}{
      author={Chang, T.},
      author={Hsiao, L.},
       title={The riemann problem and interaction of waves in gas dynamics},
   publisher={Longman Scientific \& Technical, Harlow; copublished in the
  United States with John Wiley \& Sons, Inc.: New York},
        date={1989},
}

\bib{CG}{article}{
      author={Chen, G.-Q.},
      author={Glimm, J.},
       title={Global solutions to the compressible euler equations with
  geometrical structure},
        date={1996},
     journal={Comm. Math. Phys.},
      volume={180},
       pages={153\ndash 193},
}

\bib{DPZ96}{book}{
      author={Da~Prato, G.},
      author={Zabczyk, J.},
       title={Ergodicity for infinite-dimensional systems},
      series={London Mathematical Society Lecture Note Series},
   publisher={Cambridge University Press, Cambridge},
        date={1996},
      volume={229},
         url={http://dx.doi.org/10.1017/CBO9780511662829},
}

\bib{DPD98}{article}{
      author={Da~Prato, Giuseppe},
      author={Debussche, Arnaud},
       title={Differentiability of the transition semigroup of the stochastic
  {B}urgers equation, and application to the corresponding {H}amilton-{J}acobi
  equation},
        date={1998},
     journal={Atti Accad. Naz. Lincei Cl. Sci. Fis. Mat. Natur. Rend. Lincei
  (9) Mat. Appl.},
      volume={9},
      number={4},
       pages={267\ndash 277 (1999)},
}

\bib{DPD03}{article}{
      author={Da~Prato, Giuseppe},
      author={Debussche, Arnaud},
       title={Ergodicity for the 3{D} stochastic {N}avier-{S}tokes equations},
        date={2003},
     journal={J. Math. Pures Appl. (9)},
      volume={82},
      number={8},
       pages={877\ndash 947},
}

\bib{DPD08}{article}{
      author={Da~Prato, Giuseppe},
      author={Debussche, Arnaud},
       title={On the martingale problem associated to the 2{D} and 3{D}
  stochastic {N}avier-{S}tokes equations},
        date={2008},
     journal={Atti Accad. Naz. Lincei Rend. Lincei Mat. Appl.},
      volume={19},
      number={3},
       pages={247\ndash 264},
}

\bib{DPG95}{article}{
      author={Da~Prato, Giuseppe},
      author={Gatarek, Dariusz},
       title={Stochastic {B}urgers equation with correlated noise},
        date={1995},
     journal={Stochastics Stochastics Rep.},
      volume={52},
      number={1-2},
       pages={29\ndash 41},
}

\bib{DGLM99}{article}{
      author={Desjardins, B.},
      author={Grenier, E.},
      author={Lions, P.-L.},
      author={Masmoudi, N.},
       title={Incompressible limit for solutions of the isentropic
  {N}avier-{S}tokes equations with {D}irichlet boundary conditions},
        date={1999},
     journal={J. Math. Pures Appl. (9)},
      volume={78},
      number={5},
       pages={461\ndash 471},
}

\bib{EMS01}{article}{
      author={E, W.},
      author={Mattingly, J.~C.},
      author={Sinai, Ya.~G.},
       title={Gibbsian dynamics and ergodicity for the stochastically forced
  {N}avier-{S}tokes equation},
        date={2001},
     journal={Comm. Math. Phys.},
      volume={224},
      number={1},
       pages={83\ndash 106},
         url={http://dx.doi.org/10.1007/s002201224083},
}

\bib{EKMS00}{article}{
      author={E, Weinan},
      author={Khanin, K.},
      author={Mazel, A.},
      author={Sinai, Ya.},
       title={Invariant measures for {B}urgers equation with stochastic
  forcing},
        date={2000},
     journal={Ann. of Math. (2)},
      volume={151},
      number={3},
       pages={877\ndash 960},
         url={http://dx.doi.org/10.2307/121126},
}

\bib{Feireisl04}{book}{
      author={Feireisl, Eduard},
       title={Dynamics of viscous compressible fluids},
   publisher={Oxford University Press, Oxford},
        date={2004},
      volume={26},
}

\bib{FMN13}{article}{
      author={Feireisl, Eduard},
      author={Maslowski, Bohdan},
      author={Novotn{\'y}, Anton{\'{\i}}n},
       title={Compressible fluid flows driven by stochastic forcing},
        date={2013},
     journal={J. Differential Equations},
      volume={254},
      number={3},
       pages={1342\ndash 1358},
         url={http://dx.doi.org/10.1016/j.jde.2012.10.020},
}

\bib{FN07}{article}{
      author={Feireisl, Eduard},
      author={Novotn\'y, Anton\'\i~n},
       title={The low {M}ach number limit for the full
  {N}avier-{S}tokes-{F}ourier system},
        date={2007},
     journal={Arch. Ration. Mech. Anal.},
      volume={186},
      number={1},
       pages={77\ndash 107},
}

\bib{FP01}{article}{
      author={Feireisl, Eduard},
      author={Petzeltov\'a, Hana},
       title={Asymptotic compactness of global trajectories generated by the
  navier-stokes equations of a compressible fluid},
        date={2001},
     journal={J. Differential Equations},
      volume={173},
      number={2},
       pages={390\ndash 409},
}

\bib{FP07}{article}{
      author={Feireisl, Eduard},
      author={Petzeltov\'a, Hana},
       title={On the long-time behaviour of solutions to the
  {N}avier-{S}tokes-{F}ourier system with a time-dependent driving force},
        date={2007},
     journal={J. Dynam. Differential Equations},
      volume={19},
      number={3},
       pages={685\ndash 707},
}

\bib{FG95}{article}{
      author={Flandoli, Franco},
      author={Gatarek, Dariusz},
       title={Martingale and stationary solutions for stochastic
  {N}avier-{S}tokes equations},
        date={1995},
     journal={Probab. Theory Related Fields},
      volume={102},
      number={3},
       pages={367\ndash 391},
}

\bib{FM95}{article}{
      author={Flandoli, Franco},
      author={Maslowski, Bohdan},
       title={Ergodicity of the {$2$}-{D} {N}avier-{S}tokes equation under
  random perturbations},
        date={1995},
     journal={Comm. Math. Phys.},
      volume={172},
      number={1},
       pages={119\ndash 141},
         url={http://projecteuclid.org/euclid.cmp/1104273961},
}

\bib{FR08}{article}{
      author={Flandoli, Franco},
      author={Romito, Marco},
       title={Markov selections for the 3{D} stochastic {N}avier-{S}tokes
  equations},
        date={2008},
     journal={Probab. Theory Related Fields},
      volume={140},
      number={3-4},
       pages={407\ndash 458},
}

\bib{GM05}{article}{
      author={Goldys, B.},
      author={Maslowski, B.},
       title={Exponential ergodicity for stochastic {B}urgers and 2{D}
  {N}avier-{S}tokes equations},
        date={2005},
     journal={J. Funct. Anal.},
      volume={226},
      number={1},
       pages={230\ndash 255},
}

\bib{Hairer02}{article}{
      author={Hairer, M.},
       title={Exponential mixing properties of stochastic {PDE}s through
  asymptotic coupling},
        date={2002},
     journal={Probab. Theory Related Fields},
      volume={124},
      number={3},
       pages={345\ndash 380},
         url={http://dx.doi.org/10.1007/s004400200216},
}

\bib{HM06}{article}{
      author={Hairer, Martin},
      author={Mattingly, Jonathan~C.},
       title={Ergodicity of the 2{D} {N}avier-{S}tokes equations with
  degenerate stochastic forcing},
        date={2006},
     journal={Ann. of Math. (2)},
      volume={164},
      number={3},
       pages={993\ndash 1032},
         url={http://dx.doi.org/10.4007/annals.2006.164.993},
}

\bib{Hoff87}{article}{
      author={Hoff, David},
       title={Global existence for {$1$}{D}, compressible, isentropic
  {N}avier-{S}tokes equations with large initial data},
        date={1987},
     journal={Trans. Amer. Math. Soc.},
      volume={303},
      number={1},
       pages={169\ndash 181},
}

\bib{Hoff98}{article}{
      author={Hoff, David},
       title={Global solutions of the equations of one-dimensional,
  compressible flow with large data and forces, and with differing end states},
        date={1998},
     journal={Z. Angew. Math. Phys.},
      volume={49},
      number={5},
       pages={774\ndash 785},
}

\bib{HS}{article}{
      author={Hoff, David},
      author={Serre, Denis},
       title={The failure of continuous dependence on initial data for the
  navier-stokes equations of compressible flow},
        date={1991},
     journal={SIAM J. Appl. Math.},
      volume={51},
       pages={887\ndash 898},
}

\bib{HS01}{article}{
      author={Hoff, David},
      author={Smoller, Joel},
       title={Non-formation of vacuum states for compressible {N}avier-{S}tokes
  equations},
        date={2001},
     journal={Comm. Math. Phys.},
      volume={216},
      number={2},
       pages={255\ndash 276},
}

\bib{HZ00}{article}{
      author={Hoff, David},
      author={Ziane, Mohammed},
       title={The global attractor and finite determining nodes for the
  {N}avier-{S}tokes equations of compressible flow with singular initial data},
        date={2000},
     journal={Indiana Univ. Math. J.},
      volume={49},
      number={3},
       pages={843\ndash 889},
         url={http://dx.doi.org/10.1512/iumj.2000.49.1926},
}

\bib{HZ03}{article}{
      author={Hoff, David},
      author={Ziane, Mohammed},
       title={Finite-dimensional attractors and exponential attractors for the
  {N}avier-{S}tokes equations of compressible flow},
        date={2003},
     journal={SIAM J. Math. Anal.},
      volume={34},
      number={5},
       pages={1040\ndash 1063 (electronic)},
         url={http://dx.doi.org/10.1137/S0036141002400889},
}

\bib{KazhikhovShelukhin77}{article}{
      author={Kazhikhov, A.V.},
      author={Shelukhin, V.V},
       title={Unique global solution with respect to time of
  initial-boundary-value problems for one--dimensional equations of a viscous
  gas},
        date={1977},
     journal={J. Appl. Math. Mech.},
      volume={41},
       pages={273\ndash 282},
}

\bib{Kelley55}{book}{
      author={Kelley, John~L.},
       title={General topology},
   publisher={D. Van Nostrand Company, Inc., Toronto-New York-London},
        date={1955},
}

\bib{KS00}{article}{
      author={Kuksin, Sergei},
      author={Shirikyan, Armen},
       title={Stochastic dissipative {PDE}s and {G}ibbs measures},
        date={2000},
     journal={Comm. Math. Phys.},
      volume={213},
      number={2},
       pages={291\ndash 330},
         url={http://dx.doi.org/10.1007/s002200000237},
}

\bib{KS12}{book}{
      author={Kuksin, Sergei},
      author={Shirikyan, Armen},
       title={Mathematics of two-dimensional turbulence},
   publisher={Cambridge University Press},
        date={2012},
      volume={194},
}

\bib{LM98}{article}{
      author={Lions, P.-L.},
      author={Masmoudi, N.},
       title={Incompressible limit for a viscous compressible fluid},
        date={1998},
     journal={J. Math. Pures Appl. (9)},
      volume={77},
      number={6},
       pages={585\ndash 627},
}

\bib{Lions98}{book}{
      author={Lions, Pierre-Louis},
       title={Mathematical topics in fluid mechanics. {V}ol. 2},
   publisher={The Clarendon Press, Oxford University Press, New York},
        date={1998},
      volume={10},
}

\bib{Masmoudi07}{incollection}{
      author={Masmoudi, Nader},
       title={Examples of singular limits in hydrodynamics},
        date={2007},
   booktitle={Handbook of differential equations: evolutionary equations.
  {V}ol. {III}},
      series={Handb. Differ. Equ.},
   publisher={Elsevier/North-Holland, Amsterdam},
       pages={195\ndash 275},
}

\bib{MY02}{article}{
      author={Masmoudi, Nader},
      author={Young, Lai-Sang},
       title={Ergodic theory of infinite dimensional systems with applications
  to dissipative parabolic {PDE}s},
        date={2002},
     journal={Comm. Math. Phys.},
      volume={227},
      number={3},
       pages={461\ndash 481},
         url={http://dx.doi.org/10.1007/s002200200639},
}

\bib{Mattingly02}{article}{
      author={Mattingly, Jonathan~C.},
       title={Exponential convergence for the stochastically forced
  {N}avier-{S}tokes equations and other partially dissipative dynamics},
        date={2002},
     journal={Comm. Math. Phys.},
      volume={230},
      number={3},
       pages={421\ndash 462},
         url={http://dx.doi.org/10.1007/s00220-002-0688-1},
}

\bib{MV07}{article}{
      author={Mellet, A.},
      author={Vasseur, A.},
       title={Existence and uniqueness of global strong solutions for
  one-dimensional compressible {N}avier-{S}tokes equations},
        date={2007/08},
     journal={SIAM J. Math. Anal.},
      volume={39},
      number={4},
       pages={1344\ndash 1365},
}

\bib{Sinai91}{article}{
      author={Sina{\u\i}, Ya.~G.},
       title={Two results concerning asymptotic behavior of solutions of the
  {B}urgers equation with force},
        date={1991},
     journal={J. Statist. Phys.},
      volume={64},
      number={1-2},
       pages={1\ndash 12},
}

\bib{Smith15}{article}{
      author={Smith, Scott~A.},
       title={Random perturbations of viscous, compressible fluids: global
  existence of weak solutions},
        date={2017},
     journal={SIAM J. Math. Anal.},
      volume={49},
      number={6},
       pages={4521\ndash 4578},
}

\bib{Solonnikov76}{article}{
      author={Solonnikov, V.~A.},
       title={The solvability of the initial-boundary value problem for the
  equations of motion of a viscous compressible fluid},
        date={1976},
     journal={Zap. Nau\v cn. Sem. Leningrad. Otdel. Mat. Inst. Steklov.
  (LOMI)},
      volume={56},
       pages={128\ndash 142, 197},
        note={Investigations on linear operators and theory of functions, VI},
}

\bib{Tornatore00}{article}{
      author={Tornatore, Elisabetta},
       title={Global solution of bi-dimensional stochastic equation for a
  viscous gas},
        date={2000},
     journal={NoDEA Nonlinear Differential Equations Appl.},
      volume={7},
      number={4},
       pages={343\ndash 360},
}

\bib{TH96}{article}{
      author={Tornatore, Elisabetta},
      author={Fujita~Yashima, Hisao},
       title={One-dimensional equations of a barotropic viscous gas with a not
  very regular perturbation},
        date={1994},
     journal={Ann. Univ. Ferrara Sez. VII (N.S.)},
      volume={40},
       pages={137\ndash 168 (1996)},
}

\bib{TH98}{article}{
      author={Tornatore, Elisabetta},
      author={Fujita~Yashima, Hisao},
       title={One-dimensional stochastic equations for a viscous barotropic
  gas},
        date={1997},
     journal={Ricerche Mat.},
      volume={46},
      number={2},
       pages={255\ndash 283 (1998)},
}

\bib{WW15}{article}{
      author={Wang, Dehua},
      author={Wang, Huaqiao},
       title={Global existence of martingale solutions to the three-dimensional
  stochastic compressible {N}avier-{S}tokes equations},
        date={2015},
     journal={Differential Integral Equations},
      volume={28},
      number={11-12},
       pages={1105\ndash 1154},
}

\end{biblist}
\end{bibdiv}

\end{document}